\theoremstyle{plain}
  \newtheorem{theorem}{Theorem}[section]
  \newtheorem{lemma}[theorem]{Lemma}
  \newtheorem{proposition}[theorem]{Proposition}
  \newtheorem{claim}{Claim}
\theoremstyle{definition}
  \newtheorem{definition}[theorem]{Definition}
    \newtheorem{example}[theorem]{Example}
\newcommand{\D}{\mathcal{D}}
\newcommand{\Z}{\mathbb{Z}}
\newtheorem*{proofClaim}{Proof of the claim}{\itshape}{\rmfamily}
\begin{document}

\title{Banchoff's sphere and branched covers over the trefoil}

\author{\'Alvaro Lozano-Rojo
  \and
  Rub\'en Vigara}

\address{Centro Universitario de la Defensa Zaragoza, Academia General Militar
Carretera de Huesca s/n. 50090 Zaragoza, Spain --- IUMA, Universidad de
Zaragoza}
      \email{alozano@unizar.es}
      \email{rvigara@unizar.es}


\begin{abstract}
  A filling Dehn surface in a $3$-manifold $M$ is a generically immersed surface
  in $M$ that induces a cellular decomposition of $M$. 
  Given a tame link $L$ in $M$ there is a
  filling Dehn sphere of $M$ that ``trivializes'' (\emph{diametrically 
  splits}) it. This allows to construct filling Dehn surfaces in the coverings 
  of $M$ branched over $L$.
  It is shown that one of the simplest filling Dehn spheres
  of $S^3$ (Banchoff's sphere) diametrically splits the trefoil knot.
  Filling Dehn spheres, and their Johansson diagrams, are constructed
  for the coverings of $S^3$ branched over the trefoil.
  The construction is explained in detail. Johansson diagrams for 
  generic cyclic coverings and for the simplest
  locally cyclic and irregular ones are constructed explicitly, providing
  new proofs of known results about cyclic coverings and the $3$-fold irregular
  covering over the trefoil.\\
  \\
  \emph{Dedicated to Prof. Maite Lozano on her 70th anniversary.}
  
  \keywords{$3$-manifold \and immersed surface \and 
    filling Dehn surface \and link \and knot \and branched covering}
  \subjclass{Primary 57M12 \and 57N35}
\end{abstract}

\thanks{Partially supported by the European Social Fund and Diputaci\'on General de 
  Arag\'on (Grant E15 Geometr{\'\i}a) and by MINECO grants 
  MTM2013-46337-C2, MTM2016-77642-C2, MTM2013-45710-C2 and MTM2016-76868-C2-2-P.
}

\maketitle

\section{Introduction}

Filling Dehn surfaces and their Johansson diagrams 
were introduced in~\cite{Montesinos}, following ideas of~\cite{Haken1}, as a 
new way to represent closed orientable $3$-mani\-folds. After~\cite{Montesinos} 
some works have appeared on the 
subject~\cite{Amendola09,racsam,spectrum,peazolibro,Anewproof,RHomotopies,tesis}.

In~\cite{knots}, the authors propose a general framework in which filling Dehn 
surfaces can be applied to knot theory. Any knot (or link) in any 
$3$-manifold can be nicely intersected (\emph{split}) by a filling Dehn sphere. 
This filling Dehn sphere appears to be an interesting tool for studying the 
branched coverings over the knot, because the splitting
sphere has ``nice lifts'' to these branched coverings,
in a similar way as the Heegaard surface of a $(g,1)$-decomposition of the 
knot~\cite{Cattabriga-Mulazzani,cristoforietal,doll}. In~\cite{knots}, this is exemplified 
with the simplest of all knots: the unknot. The techniques 
of~\cite{knots} are applied here to the next knot in increasing 
complexity after the unknot: the trefoil knot.

In Section~\ref{sec:Dehn-surfaces-Johansson-diagrams} we introduce the basic 
definitions and notation about filling Dehn surfaces. 
Section~\ref{sec:splitting} recalls the tools introduced in~\cite{knots}. We 
refer to~\cite{knots,peazolibro} and references therein for more details on 
this subject. 

In Section~\ref{sec:trefoil}, it is 
shown that one of the simplest filling Dehn spheres of $S^3$
(\emph{Banchoff's sphere}) splits the trefoil knot. This is used in the 
subsequent sections to study covers of $S^3$ branched over the trefoil.
In Section~\ref{sec:branched-covers} we study the cyclic branched covers,
obtaining a new proof of Theorem~2.1 of~\cite{CHK} that asserts that the 
$3$-manifolds introduced in~\cite{sieradski} coincide with the
cyclic branched covers of the trefoil knot. Section~\ref{sec:other-examples}
gives some information about other type of coverings, as the locally cyclic
(Section~\ref{sub:trefoil-locally-cyclic}) and irregular ones 
(Section~\ref{sub:trefoil-non-cyclic}). In particular,
in Section~\ref{sub:trefoil-non-cyclic} we give another proof of the well known
result that asserts that the irregular $3$-fold covering of $S^3$ branched over the trefoil 
is $S^3$~\cite{Burde,GH,MontesinosTesis}.

\section{Dehn surfaces and their Johansson's diagrams}
\label{sec:Dehn-surfaces-Johansson-diagrams} 

Throughout the paper all $3$-manifolds are assumed to be closed and 
orientable, that is, compact connected and without boundary. On the
contrary, surfaces are assumed to be compact, orientable and without boundary, 
but they could be disconnected. 
All objects are assumed to be in the smooth 
category: manifolds have a differentiable structure and all maps are smooth.

Let $M$ be a 3-manifold.

A subset $\Sigma\subset M$ is a \emph{Dehn surface} in $M$~\cite{Papa} if there 
exists a surface $S$ and a general
position immersion $f:S\rightarrow M$ such 
that $\Sigma=f\left(S\right)$. If this is the case, the surface $S$ is the 
\emph{domain} of $\Sigma$ and it is said that $f$ \emph{parametrizes} $\Sigma$.
If $S$ is a $2$-sphere, then $\Sigma$ is a \emph{Dehn sphere}. 

Let $\Sigma$ be a Dehn surface in $M$ and consider a parametrization $f:S\to M$ 
of $\Sigma$. The \emph{singularities} of $\Sigma$ are the points $x\in\Sigma$ 
such that $\#f^{-1}(x)>1$, and they are divided into \emph{double points} where 
two sheets of $\Sigma$ intersect transversely ($\#f^{-1}(x)=2$), and 
\emph{triple points} where three sheets of $\Sigma$ intersect transversely 
($\#f^{-1}(x)=3$). The singularities of $\Sigma$ form the \emph{singularity 
set} $S(\Sigma)$ of $\Sigma$. We denote by $T(\Sigma)$ the set of triple points 
of $\Sigma$.
The connected components of $S(\Sigma)-T(\Sigma)$, $\Sigma-S(\Sigma)$ and
$M-\Sigma$ are the \emph{edges}, \emph{faces} and \emph{regions} of $\Sigma$,
respectively.

In the following a \emph{curve} in $S$, $\Sigma$ or $M$ is the image of an
immersion from $\mathbb{S}^1$ or $\mathbb{R}$ 
into $S$, $\Sigma$ or $M$, respectively. A \emph{double curve} of $\Sigma$ is a 
curve in $M$ contained in $S(\Sigma)$.

The preimage under $f$ of the 
singularity set of $\Sigma$, together with the information about how its 
points become identified by $f$ in $\Sigma$ is the \emph{Johansson diagram} 
$\D$ of $\Sigma$, see~\cite{Johansson1,Montesinos}. Two points of $S$ 
are \emph{related} by $f$ if they project onto the same point of $\Sigma$.

Because $S$ is compact and without boundary, double curves are closed and there 
is a finite number of them, and the number of triple points is also finite. Since $S$ and $M$ are 
orientable, the preimage under $f$ of a double curve of $\Sigma$ is the union 
of two different closed curves in $S$. These two curves are \emph{sister 
curves} of $\D$. Thus, the Johansson diagram of $\Sigma$ is composed by an even 
number of different closed curves in $S$. We identify $\D$ with the 
set of different curves that compose it. For any curve $\alpha\in\D$ we denote 
by $\tau \alpha$ the sister curve of $\alpha$ in $\D$. This defines a free 
involution $\tau:\D\rightarrow\D$, the \emph{sistering} of $\D$, that sends 
each curve of $\D$ into its sister curve in $\D$.

The curves of $\D$ transversely meet others or themselves at the 
\emph{crossings} of $\D$. The crossings of $\D$ are the preimage under $f$ of 
the triple points of $\Sigma$. If $P$ is a triple point of $\Sigma$, the three 
crossings of $\D$ in $f^{-1}(P)$ form \emph{the triplet of} $P$.

The Dehn surface $\Sigma\subset M$ \emph{fills} $M$ if it defines a 
cell-decomposition of $M$ whose $0$-, $1$- and $2$-dimensional skeletons are 
$T(\Sigma)$, $S(\Sigma)$, and $\Sigma$ respectively~\cite{Montesinos}. If 
$\Sigma$ fills $M$ and the domain $S$ of $\Sigma$ is connected, then it is 
possible to build $M$ out of the Johansson diagram $\D$ of $\Sigma$. Since 
every $3$-manifold has a filling Dehn sphere~\cite{Montesinos}, Johansson 
diagrams of filling Dehn spheres represent all closed orientable $3$-manifolds.

A special case of filling Dehn surfaces is when the domain $S$ of the filling 
Dehn surface $\Sigma$ is a disjoint union of $2$-spheres. In this case, we say 
that $\Sigma$ is a \emph{filling collection of spheres}.

\section{Splitting knots with filling Dehn spheres}
\label{sec:splitting}

In the following paragraphs we summarize some definitions and results 
from~\cite{knots}. Results are stated without proof.

Let $L$ be a tame knot or link in a $3$-manifold $M$, and let $\Sigma$ be a 
filling Dehn surface of $M$.

\begin{definition}
  The Dehn surface $\Sigma$ \emph{splits} $L$ if:
  \begin{enumerate}
    
    \item $L$ intersects $\Sigma$ transversely in a finite set of non-singular 
      points of $\Sigma$;
    
    \item for each region $R$ of $\Sigma$, if the intersection $R\cap L$ is 
      non-empty it is exactly one arc, unknotted in $R$; and
    
    \item for each face $F$ of $\Sigma$, the intersection $F\cap L$ contains at 
      most one point.
    
  \end{enumerate}

The Dehn surface $\Sigma$ \emph{diametrically splits} $L$ if it splits $L$ and 
it intersects each connected component of $L$ exactly twice. 
\end{definition}

\begin{theorem}
  There is a filling Dehn sphere of $M$ that diametrically splits $L$. \qed
\end{theorem}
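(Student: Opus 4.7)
The plan is to start from any filling Dehn sphere $\Sigma_0$ of $M$ — such a sphere exists by the theorem of Montesinos recalled in Section~\ref{sec:Dehn-surfaces-Johansson-diagrams} — and then perform a finite sequence of local surgeries that preserve both the sphere topology of the domain and the filling property, while progressively adapting $\Sigma_0$ to the link $L$ until it diametrically splits it. The first step is to put $L$ in general position with respect to $\Sigma_0$: after a small ambient isotopy $L$ meets $\Sigma_0$ transversely in a finite set of non-singular points, missing $S(\Sigma_0)$. This already secures condition~(1) of the definition of splitting.

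Next I would normalise, component by component, the intersection count $\#(K\cap\Sigma)$ to equal $2$. If $K$ meets $\Sigma$ in more than two points, I pick two of them that are consecutive along $K$, together with the subarc of $K$ joining them, and use this subarc to perform a tube/finger move on $\Sigma$ that cancels the pair of intersections; dually, if $K$ misses $\Sigma$ altogether, a small meridional disk of $K$ can be tubed to $\Sigma$ through a short arc, producing two new intersection points. Each such move is supported in a small 3-ball, so it preserves the ``sphere'' character of the domain (the tube attaches a pair of antipodal disks) and keeps the filling property, because locally it amounts to a standard modification inside a ball. After finitely many moves each component of $L$ meets $\Sigma$ in exactly two points.

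The final step is to secure conditions~(2) and~(3): each arc into which $\Sigma$ cuts a component must lie unknotted inside a single region, and the two intersection points on each component must lie in distinct faces of $\Sigma$. I would enforce these by introducing small \emph{bubbles} — pairs of parallel disks inserted into $\Sigma$ inside a 3-ball — to subdivide a face that contains two intersection points, or to subdivide a region across which an arc of $L$ runs in a complicated way. A single bubble adds one triple point and a circle of double points while keeping the domain a sphere and preserving that each new face is a disk and each new region a ball, so $\Sigma$ remains a filling Dehn sphere. Running bubbles along a spanning disk for an arc of $L$, which exists because $L$ is tame and the region containing the arc is a $3$-ball, trivialises any knotting of that arc inside its region.

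The main obstacle will be bookkeeping: each local surgery must be verified to preserve the filling property globally, and the surgeries used to fix one component must not disturb the normalisations achieved for another. This is handled by arranging that the supports of the surgeries are pairwise disjoint small balls, disjoint from $S(\Sigma)$ and from the previously adjusted components. The geometric core of the argument is the existence, after step two, of pairwise disjoint spanning disks for all the arcs of $L\setminus\Sigma$ inside their respective regions, guaranteed precisely by tameness of $L$ and by the fact that the regions of a filling Dehn sphere are $3$-balls; once such disks are in hand, the bubbling argument is a routine finite induction.
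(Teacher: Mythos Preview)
The paper does not prove this theorem: it is one of the results imported from \cite{knots} and marked with a \qed, so there is no proof in the present paper to compare yours against.

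As for correctness, your outline has a real gap. The decisive claim in the last paragraph --- that after reducing each component to two intersection points, every arc of $L\setminus\Sigma$ admits a spanning disk in its region, ``guaranteed precisely by tameness of $L$ and by the fact that the regions of a filling Dehn sphere are $3$-balls'' --- is false. A tame properly embedded arc in a $3$-ball can perfectly well be knotted (e.g.\ a trefoil arc), and then no spanning disk exists; nothing in your earlier steps prevents this. Your bubbling procedure, which presupposes such a disk, therefore has nothing to run along. A related problem already appears in step three: tubing $\Sigma$ along a subarc of $K$ between two consecutive intersection points attaches a $1$-handle to the domain and turns the sphere into a torus, contrary to your claim that it ``preserves the sphere character of the domain''; a finger move does keep the domain a sphere, but it creates new double curves with the sheet at the other endpoint rather than simply erasing the two intersection points. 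Both issues point to the same missing ingredient: one cannot in general massage an \emph{arbitrary} pre-existing filling Dehn sphere onto $L$ by purely local moves of this kind. A construction that builds $\Sigma$ with $L$ in hand from the outset --- for instance starting from a triangulation of $M$ having $L$ in its $1$-skeleton --- arranges the unknottedness of the arcs by design rather than trying to achieve it a posteriori.
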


Assume that $\Sigma$ splits $L$.

Our interest in filling Dehn surfaces that (diametrically) split knots relies 
on the following result. Let $p:\widehat{M}\to M$ be a finite sheeted branched 
covering with downstairs branching set $L$, and take 
$\widehat{\Sigma}=p^{-1}(\Sigma)$.

\begin{theorem}
  \label{thm:filling-lift-to-filling}
  The Dehn surface $\widehat{\Sigma}$ fills $\widehat{M}$. Moreover, if $L$ 
  is a knot and $\Sigma$ diametrically splits $L$ then $\widehat{\Sigma}$ is a 
  filling collection of spheres in $\widehat{M}$, and it is a Dehn sphere if and 
  only if $p$ is locally cyclic. \qed
\end{theorem}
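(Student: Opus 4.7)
The plan is to study $\widehat{\Sigma}=p^{-1}(\Sigma)$ locally, pulling back the cell structure that $\Sigma$ provides on $M$ to a cell structure on $\widehat{M}$. Away from $L$ the map $p$ is a local homeomorphism, so the parametrization $f\colon S\to M$ of $\Sigma$ lifts via the pullback of $p$ to a parametrization of $\widehat{\Sigma}$ with the same local immersed structure (non-singular, double, and triple points are preserved away from $L$). Near a point of $L$, $p$ is modelled in suitable local coordinates by $(z,t)\mapsto(z^k,t)$ with $L=\{z=0\}$; condition~(1) of splitting says that $\Sigma$ meets $L$ transversely at a non-singular point, so locally $\Sigma$ is the slice $\{t=0\}$, whose preimage is again a smooth embedded disk. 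This will show $\widehat{\Sigma}$ is a Dehn surface.

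Next I would check cell by cell that $\widehat{\Sigma}$ fills $\widehat{M}$. Each region $R$ of $\Sigma$ is a $3$-ball with $R\cap L$ either empty or an unknotted arc by~(2); each component of $p^{-1}(R)$ is therefore a cyclic branched cover of $(B^3,\text{unknotted arc})$ (or a trivial cover), and such covers are again $3$-balls. Each face $F$ is a disk meeting $L$ in at most one point by~(3), so each component of $p^{-1}(F)$ is a (possibly branched) cyclic cover of a disk over at most one interior point, which is a disk. Edges and triple points are disjoint from $L$ and lift as trivial covers. Together these give the required skeleta $T(\widehat{\Sigma})$, $S(\widehat{\Sigma})$, $\widehat{\Sigma}$ of a cell decomposition of $\widehat{M}$.

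For the second statement, assume $L$ is a knot and $\Sigma$ diametrically splits it, so $S=S^2$ and $L\cap\Sigma$ consists of exactly two non-singular points, each with a single preimage in $S$. The domain $\widehat{S}$ of $\widehat{\Sigma}$ is the branched cover of $S$ induced by $p$ through $f$, branched over these two preimages. Any branched cover of $S^2$ over two points is a disjoint union of $2$-spheres (e.g.\ by Riemann--Hurwitz, or by writing it explicitly from its cyclic monodromy): its components are in bijection with the cycles of the monodromy permutation $\sigma\in\mathfrak{S}_n$ of $p$ around a meridian of $L$. Hence $\widehat{\Sigma}$ is a filling collection of spheres, and $\widehat{S}$ is connected precisely when $\sigma$ is an $n$-cycle, i.e.\ when $p$ is locally cyclic.

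The most delicate step I expect is in the second paragraph: ensuring that the three splitting conditions are exactly what is needed, and in particular that a cyclic branched cover of $(B^3,\text{unknotted proper arc})$ is still a $3$-ball, so that regions lift to regions. The transversality and non-singularity in~(1) are what makes $\widehat{\Sigma}$ smooth at branch preimages, while the "at most one point" in~(3) is what prevents faces from breaking into non-disk components under branching.
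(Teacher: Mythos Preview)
The paper does not actually prove this theorem: it is one of the results quoted from \cite{knots} (Section~\ref{sec:splitting} opens with ``Results are stated without proof'', and the statement itself ends with \qed). So there is no in-paper proof to compare against.

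Your argument is correct and is the natural one---almost certainly what \cite{knots} does. The three splitting conditions are used exactly as you say: condition~(1) makes $\widehat{\Sigma}$ a smooth generically immersed surface even over the branch set; condition~(2) forces each component of $p^{-1}(R)$ to be a cyclic branched cover of $(B^3,\text{unknotted arc})$, hence a $3$-ball; condition~(3) does the same for faces. The only point worth making more explicit in the final paragraph is the link between the two monodromies: since $\pi_1(S^2\setminus\{A,B\})\cong\Z$ is generated by a small loop $m$ around $A$, and $f$ carries $m$ to a meridian of $L$ (by the transversality in condition~(1)), one has $\rho_S(m)=\rho(\text{meridian})$, so the cycle structure governing the components of $\widehat{S}$ is exactly the one in the definition of ``locally cyclic''. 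With that sentence added, your proof is complete.
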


Recall that a $n$-fold covering $p$ branched over a knot $L$ is \emph{locally 
cyclic} if its monodromy map $\rho$ sends knot meridians onto 
$n$-cycles~\cite[p.~209]{ST}. This is equivalent to say that $p:p^{-1}(L)\to L$ 
is a homeomorphism.

For the study of branched coverings over $L$ it is essential to know 
its group, i.e. the fundamental group of $M-L$. This can be done also using 
$\Sigma$. If $R_1,\ldots,R_m$ are all the different regions of $\Sigma$ 
disjoint from $L$ and we take a point $Q_i$ in each of these regions, 
$\Sigma-L$ is a strong deformation retract of 
$M-\left(L\cup \{Q_1,\ldots,Q_m\}\right)$. Hence

\begin{proposition}
  The fundamental groups of $M-L$ and $\Sigma- L$ are isomorphic.\hfill$\qed$
\end{proposition}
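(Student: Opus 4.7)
The plan is to execute in detail the hint given just before the statement, and then combine it with a standard fact about removing points from a $3$-manifold. The structural input is that $\Sigma$ fills $M$, so the regions of $\Sigma$ are open $3$-cells attached to $\Sigma$ via the cell decomposition and, by compactness of $M$, there are only finitely many of them.

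First, I would build the strong deformation retraction of $M\setminus(L\cup\{Q_1,\ldots,Q_m\})$ onto $\Sigma\setminus L$ region by region. For each region $R_i$ disjoint from $L$, the closed region $\overline{R_i}$ is a $3$-ball with $\partial\overline{R_i}\subset\Sigma$, so $\overline{R_i}\setminus\{Q_i\}$ radially deformation retracts onto $\partial\overline{R_i}\subset\Sigma\setminus L$. For each region $R$ that meets $L$, condition~(2) of the splitting definition gives a homeomorphism of pairs $(\overline R,\overline{R\cap L})\cong(B^{3},D)$ with $D$ a diameter of $B^{3}$, so $\overline R\setminus L$ radially deformation retracts onto $\partial\overline R$ minus the two endpoints of $R\cap L$; those endpoints lie in $L\cap\Sigma$, so the image of the retraction is contained in $\Sigma\setminus L$. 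Each of these local retractions fixes $\Sigma\setminus L$ pointwise, so they glue to a global strong deformation retraction and yield $\pi_{1}(\Sigma\setminus L)\cong\pi_{1}(M\setminus(L\cup\{Q_1,\ldots,Q_m\}))$.

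Second, I would dispose of the auxiliary punctures. Since a point has codimension three in the open $3$-manifold $M\setminus L$, the inclusion $(M\setminus L)\setminus\{Q_i\}\hookrightarrow M\setminus L$ induces an isomorphism on $\pi_{1}$; a standard van~Kampen argument, applied to a small open $3$-ball around $Q_i$ and its complement, makes this precise (the punctured ball is simply connected and the intersection is homotopy equivalent to $S^{2}$). Iterating over $i=1,\ldots,m$ gives $\pi_{1}(M\setminus(L\cup\{Q_1,\ldots,Q_m\}))\cong\pi_{1}(M\setminus L)$, and composing with the previous isomorphism proves the proposition.

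The only mildly delicate point is that the region-by-region retractions must agree on common $2$-faces so as to assemble into a continuous global homotopy; this compatibility is automatic because each local retraction is chosen to fix $\Sigma\setminus L$ pointwise and the shared faces of distinct closed regions lie in $\Sigma$. The unknottedness assumption in condition~(2) of the splitting definition is precisely what provides the explicit local model needed for regions meeting $L$; with that in hand, the rest is routine $3$-dimensional topology.
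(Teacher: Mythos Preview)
Your proposal is correct and follows exactly the approach sketched in the paper just before the statement (which is stated there without further proof, the section being a summary of results from~\cite{knots}). One minor imprecision worth fixing: the closure $\overline{R}$ of a region need not literally be a $3$-ball, since the attaching map $S^{2}\to\Sigma$ may identify boundary points, but your radial retraction argument goes through verbatim once phrased via the characteristic map $\Phi:D^{3}\to M$ of the $3$-cell (the retraction of $D^{3}\setminus\{\text{center}\}$ onto $\partial D^{3}$ descends to $\overline{R}\setminus\{Q_i\}$ precisely because it fixes $\partial D^{3}$ pointwise, and likewise for the regions meeting $L$).
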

  
If $f:S\to M$ is a parametrization of $\Sigma$, the pair $(\D,f^{-1}(L))$, 
where $\D$ is the Johansson diagram of $\Sigma$, is a
\emph{Johansson diagram of} $L$.

\begin{proposition}\label{prop:diagram-represent-knots}
  The pair $(M,L)$ can be recovered from a Johansson diagram of $L$. In 
  particular, if $L'$ is a link in a $3$-manifold $M'$ such that $L$ and $L'$
  have identical Johansson diagrams, there is a homeomorphism between $M$ and 
  $M'$ that maps $L$ onto $L'$. \qed
\end{proposition}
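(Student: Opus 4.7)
The plan is to recover $(M,\Sigma)$ from $\D$ first, and then to locate $L$ inside $M$ region by region, using the extra data $f^{-1}(L)$ and the three defining properties of a splitting Dehn surface.

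To begin, I would invoke the reconstruction recalled at the end of Section~\ref{sec:Dehn-surfaces-Johansson-diagrams} (from~\cite{Montesinos}): out of the Johansson diagram $\D$ on the domain $S$ together with its sistering $\tau$, one rebuilds a parametrization $f:S\to M$ of $\Sigma$ and hence the pair $(M,\Sigma)$, uniquely up to homeomorphism. This also determines the combinatorics of the induced cell decomposition of $M$, so faces, regions and their incidences are all encoded in $\D$.

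Next I would extract the combinatorial position of $L$ with respect to this cell structure. Let $P:=f(f^{-1}(L))\subset\Sigma$. By condition~(1) of the splitting definition, $P$ is a finite set of non-singular points, so it is distributed among the open faces of $\Sigma$; by condition~(3), each face contains at most one point of $P$. Since face-region incidence is part of $\D$, for every region $R$ we can read off the set $P\cap\partial\overline{R}$. Condition~(2) then says that $L\cap R$ is either empty or a single unknotted arc in $\overline{R}$ whose two endpoints are exactly the points in $P\cap\partial\overline{R}$.

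It remains to realize each such arc inside its region and to assemble them globally. Because $\Sigma$ fills $M$, each region $R$ is an open $3$-ball; via the characteristic map of the corresponding $3$-cell we may identify $\overline{R}$ with a closed $3$-ball and the prescribed endpoints with specified points of its boundary sphere. The technical step --- which I expect to be the only nontrivial one --- is the classical uniqueness of unknotted properly embedded arcs in a $3$-ball: any two such arcs sharing the same endpoints are ambient isotopic rel boundary. Granted this, $L\cap \overline{R}$ is determined up to isotopy by $\D$ and $f^{-1}(L)$, and gluing the arcs across the shared points of $P$ recovers $L\subset M$ up to ambient isotopy. Performing the same construction on any other pair $(M',L')$ with the same Johansson diagram then yields the required homeomorphism of pairs $(M,L)\cong(M',L')$.
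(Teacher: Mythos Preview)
Your argument is correct and is essentially the natural proof of this statement. Note, however, that in the present paper the proposition is stated \emph{without proof}: Section~\ref{sec:splitting} opens with ``Results are stated without proof'' and the proposition carries a \qed, the full argument being deferred to~\cite{knots}. So there is no in-paper proof to compare against; what you have written is exactly the reconstruction one expects---recover $(M,\Sigma)$ and its cell structure from $\D$, read off the pair of marked faces on the boundary of each region, and invoke the uniqueness (up to ambient isotopy rel boundary) of an unknotted properly embedded arc in a $3$-ball. The only point worth making explicit is why each region meeting $L$ has \emph{exactly two} marked boundary faces: any point of $f^{-1}(L)$ lies on a face, $L$ crosses that face transversely, so it enters both adjacent regions; combined with condition~(2) this forces the count to be $0$ or $2$, which you implicitly use when pairing endpoints.
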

Thus, all the information about $L$ is codified 
  in its Johansson diagram.

A presentation of the fundamental group of a Dehn surface in terms of its 
Johansson diagram was introduced in~\cite{spectrum} (cf.~\cite{peazolibro}). 
Although the presentation given there is stated for Dehn surfaces of genus $g$, 
it is valid also in a more general context, including the case  of $\Sigma-L$ 
where the domain surface is a punctured sphere. The generators of this 
presentation are of two kinds: \emph{surfacewise} generators and 
$\D$-\emph{dual} generators.

Set $M_L := M-L$ and $S_L := S-f^{-1}(L)$. Take a non-singular point $x$ of 
$\Sigma$ as the base point of the fundamental group $\pi_L:=\pi_1(M_L,x)$ of 
$M_L$. We also denote by $x$ the preimage of $x$ under $f$, and we choose it as 
the base point of the fundamental group $\pi_{S_L}:=\pi_1(S_L,x)$ of $S_L$.

The surfacewise generators of $\pi_L$ are obtained by pushing forward a 
generating set of $\pi_{S_L}$ to $M$ through $f$: if
$\gamma_1,\ldots,\gamma_k$ are representatives of a set of generators 
of $\pi_{S_L}$, then $f\circ\gamma_1,\ldots,f\circ\gamma_k$ are 
representatives of a set of surfacewise generators of $\pi_L$.

Let $\alpha$ and $\beta$ be sister curves of the diagram $\D$. Consider two 
paths $a$ and $b$ in $S_L$ starting at $x$ and ending at points
on $\alpha$ and $\beta$ respectively. Assume that the endpoints of $a$ 
and $b$ are related by $f$ 
and that they are not crossings of $\D$. In this case, 
$\alpha^{\#}=(f\circ a)\,(f\circ b)^{-1}$ is a loop in $\Sigma-L$ which is said 
to be \emph{dual to} $\alpha$. The inverse loop 
$\beta^{\#}=(\alpha^{\#})^{-1}=(f\circ b)\,(f\circ a)^{-1}$ is dual to $\beta$. 
After repeating this construction for each pair of sister curves of $\D$ we 
obtain the set $\D^{\#}$ of $\D$-dual generators of $\pi_L$.

\begin{proposition}[\cite{spectrum}]
  Surfacewise generators and  $\D$-dual generators generate $\pi_L$.\hfill$\qed$
\end{proposition}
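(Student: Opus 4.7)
The plan is to take an arbitrary class $[\gamma]\in\pi_L$ and explicitly rewrite it as a word in the proposed generators, using a transversality argument on $\Sigma$.

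I would begin by invoking the isomorphism $\pi_1(M_L,x)\cong\pi_1(\Sigma-L,x)$ from the preceding proposition, so that any class in $\pi_L$ is represented by a loop $\gamma\colon[0,1]\to\Sigma-L$ based at $x$. A small homotopy puts $\gamma$ in general position with respect to $S(\Sigma)$: it avoids $L$ and meets $S(\Sigma)$ transversely in finitely many non-triple double points $P_1,\dots,P_n$. This subdivides $\gamma$ as $\gamma_1\gamma_2\cdots\gamma_{n+1}$, with each arc $\gamma_i$ lying entirely in $\Sigma\setminus(S(\Sigma)\cup L)$ except possibly at its endpoints, and therefore lifting uniquely to a path $\widetilde\gamma_i$ in $S_L$ from some point $\widetilde x_{i-1}$ to some point $\widetilde x_i$ (with $\widetilde x_0=\widetilde x_{n+1}=\widetilde x$). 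At the $i$-th crossing, $\widetilde x_i$ lies on a curve $\alpha_i\in\D$, while the starting point $\widetilde x_i'$ of $\widetilde\gamma_{i+1}$ lies on its sister $\tau\alpha_i$, and the two are related by $f$.

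Next, for $i=1,\dots,n$ I choose auxiliary paths $a_i$ and $a_i'$ in $S_L$ from $\widetilde x$ to $\widetilde x_i$ and to $\widetilde x_i'$ respectively, which is possible because $S_L$ is path-connected. Inserting the trivial segments $(f\circ a_i)^{-1}(f\circ a_i)$ and $(f\circ a_i')^{-1}(f\circ a_i')$ at each $P_i$ and using the convention that $a_{n+1}$ is constant at $\widetilde x$, I can rewrite
\[
[\gamma]=\bigl[f\circ(\widetilde\gamma_1\cdot a_1^{-1})\bigr]\,\prod_{i=1}^{n}\Bigl(\bigl[(f\circ a_i)(f\circ a_i')^{-1}\bigr]\cdot\bigl[f\circ(a_i'\cdot\widetilde\gamma_{i+1}\cdot a_{i+1}^{-1})\bigr]\Bigr).
\]
Each factor of the form $f\circ(a\cdot\widetilde\gamma_i\cdot b^{-1})$ is the image under $f$ of a loop in $S_L$ based at $\widetilde x$, hence a word in the surfacewise generators. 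Each factor $(f\circ a_i)(f\circ a_i')^{-1}$ is a $\D$-dual loop for the sister pair $\alpha_i,\tau\alpha_i$; it differs from the fixed $\D$-dual generator $\alpha_i^{\#}$ only by conjugation and multiplication by surfacewise generators, accounting for the freedom in choosing the auxiliary paths.

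The main obstacle is the bookkeeping at each double-curve crossing: verifying that the sheet change at $P_i$ really is absorbed into a single $\D$-dual factor, and that all auxiliary paths can be chosen inside $S_L$, which only requires avoiding the finite puncture set $f^{-1}(L)$. Both points reduce to standard general-position arguments, leaving the remainder as the usual concatenation trick for expressing fundamental-group elements of a $2$-complex.
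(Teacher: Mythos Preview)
Your argument is essentially correct and is the standard route to such a statement. Note, however, that the paper itself does not prove this proposition: it is quoted from~\cite{spectrum} and carries an immediate $\qed$, so there is no paper-side argument to compare against. What you have written is presumably close in spirit to the proof in~\cite{spectrum} (and to the treatment in~\cite[Chp.~4]{peazolibro}), which also builds a presentation of the fundamental group of a Dehn surface by lifting arcs between singular crossings and inserting dual loops at each sheet change.

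One small imprecision is worth tightening. At a point $P_i$ where $\gamma$ meets $S(\Sigma)$ the path need not switch sheets: the arcs $\gamma_i$ and $\gamma_{i+1}$ may lift to the \emph{same} preimage $\widetilde x_i=\widetilde x_i'$ in $S_L$ (this occurs when $\gamma$ stays on one sheet while crossing the trace of the other). In that case your factor $(f\circ a_i)(f\circ a_i')^{-1}$ is the $f$-image of the loop $a_i\cdot(a_i')^{-1}$ in $S_L$ and is therefore already surfacewise, not $\D$-dual; your sentence ``$\widetilde x_i'$ lies on its sister $\tau\alpha_i$'' should be qualified accordingly. This does not affect the conclusion. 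The remaining point you flag---that any two $\D$-dual loops for the same sister pair differ by surfacewise factors---is indeed routine: choosing a path along the double curve between the two contact points and lifting it separately to each sister curve exhibits the difference as a product of $f$-images of loops in $S_L$.
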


Thus, surfacewise and $\D$-dual generators lead to a presentation of $\pi_L$. 
The relators associated to this presentation are detailed in~\cite{spectrum}.

If $p:\widehat{M}\to M$ is an $n$-fold ($n<\infty$) covering of $M$ branched over 
$L$, according to Theorem~\ref{thm:filling-lift-to-filling}, $\Sigma$ lifts to 
a filling Dehn surface $\widehat{\Sigma}$ of $\widehat{M}$. We want to construct 
the Johansson diagram $\widehat{\D}$ of $\widehat{\Sigma}$. This construction is 
specified in Algorithm 3.4 of~\cite{knots}. The presentation of $\pi_L$ in 
terms of surfacewise and $\D$-dual generators fits quite well to this purpose. 
In short form:
\begin{itemize}
  
  \item surfacewise generators of $\pi_L$ allow to construct the domain surface 
    $\widehat{S}$ of $\widehat{\Sigma}$; and
  
  \item $\D$-dual generators allow to decide the sistering between the curves 
    of $\widehat{\D}$.
  
\end{itemize}

By~\cite{spectrum}, there is a commutative diagram
\[
  \begin{tikzcd}
    \widehat{S} \rar{\widehat{f}} \dar[swap]{p_S} & \widehat{M} \dar{p} \\
    S \rar[swap]{f} & M
  \end{tikzcd}
\]
where $\widehat{f}$ is a parametrization of $\widehat{\Sigma}$, and $p_S$ is an $n$-fold
branched covering with branching set $f^{-1}(L)$.

Take $p^{-1}(x)=\{x_1,\ldots,x_n\}$, and also denote by $\{x_1,\ldots,x_n\}$ 
the corresponding points in $\widehat{S}$. If $\rho:\pi_L\to \Omega_n$ is the 
monodromy homomorphism associated with $p$, where $\Omega_n$ is the group of 
permutations of the set $\{x_1,\ldots,x_n\}$, the monodromy homomorphism 
$\rho_S:\pi_{S_L}\to \Omega_n$ associated with $p_S$ verifies 
$\rho_S=\rho \circ f_*$, where $f_*: \pi_{S_L}\to \pi_L$ is the homomorphism 
induced by $f$.

Since $f_*$ sends a set of generators of $\pi_{S_L}$ onto the surfacewise 
generators of $\pi_L$, $\rho_S$ is essentially the same as $\rho$ restricted to 
the surfacewise generators of $\pi_L$. Hence, the knowledge of $\rho$ allows to
construct $\widehat{S}$.

Once $\widehat{S}$ is constructed, the curves of $\widehat{\D}$ are the lifts to 
$\widehat{S}$ of the curves of 
$\D$. Consider the pair of sister curves $\alpha$ and $\beta$ of $\D$ and their 
associated paths $a$ and $b$ as before. Let $a_i$ and 
$b_i$ be the lifts of $a$ and $b$ respectively to $\widehat{S}$ based at $x_i$, and 
let $\alpha_i$ and $\beta_i$ be the lifts of $\alpha$ and $\beta$ passing
through the endpoint of $a_i$ and $b_i$ respectively, with $i=1,\ldots,n$.
The monodromy map $\rho$ assigns to $\alpha^{\#}$ the permutation 
$\rho(\alpha^{\#})$ of $\{x_1,\ldots,x_n\}$ given by
\[
  \rho(\alpha^{\#})(x_i)=x_{j}\iff 
    \text{\emph{the lift of $\alpha^{\#}$ starting at $x_i$ ends at $x_{j}$}.}
\]
By the construction of $\alpha^{\#}$, the right-hand side of the previous 
equivalence is indeed equivalent to say that the endpoints of $a_i$ and 
$b_j$ are related by $\widehat{f}$. 
Therefore, $\alpha_i$ and $\beta_j$ are sister curves in $\widehat{\D}$ and they 
must be identified in such a way that the endpoints of $a_i$ and $b_j$ are related 
by $\widehat{f}$. Hence, $\rho(\alpha^{\#})$ tells us how the
lifts of $\alpha$ to $\widehat{S}$ must be identified with the lifts of 
$\beta$ to $\widehat{S}$. Repeating the same argument for the rest of $\D$-dual 
generators, all the identifications between the curves of $\widehat{\D}$ are 
established.

\section{Banchoff's sphere and the trefoil knot}
\label{sec:trefoil}

\begin{figure}
  \centering
  \subfigure[]{
    \includegraphics[width=0.25\textwidth]{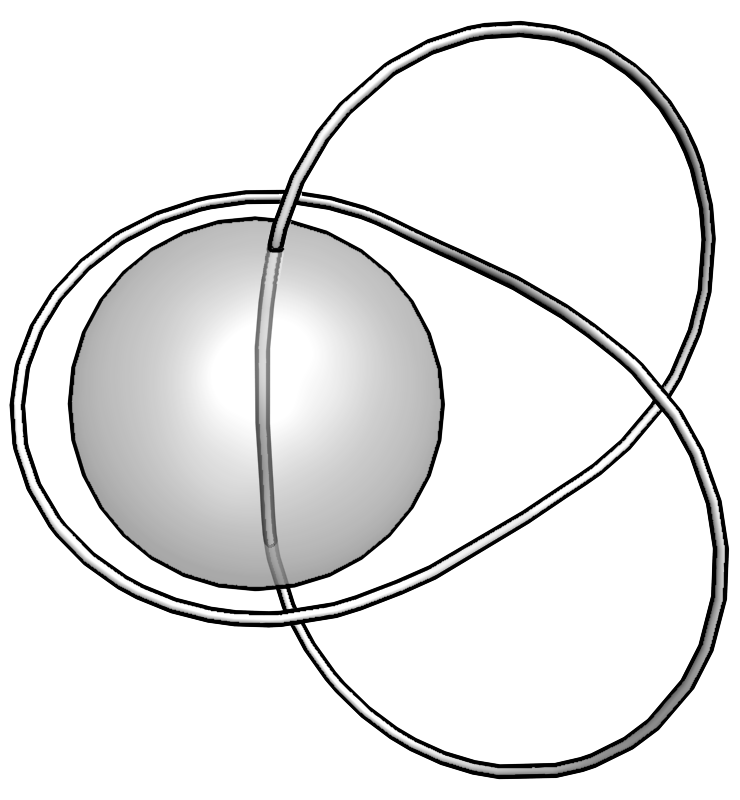}
  }
  \hfill
  \subfigure[]{
    \includegraphics[width=0.35\textwidth]{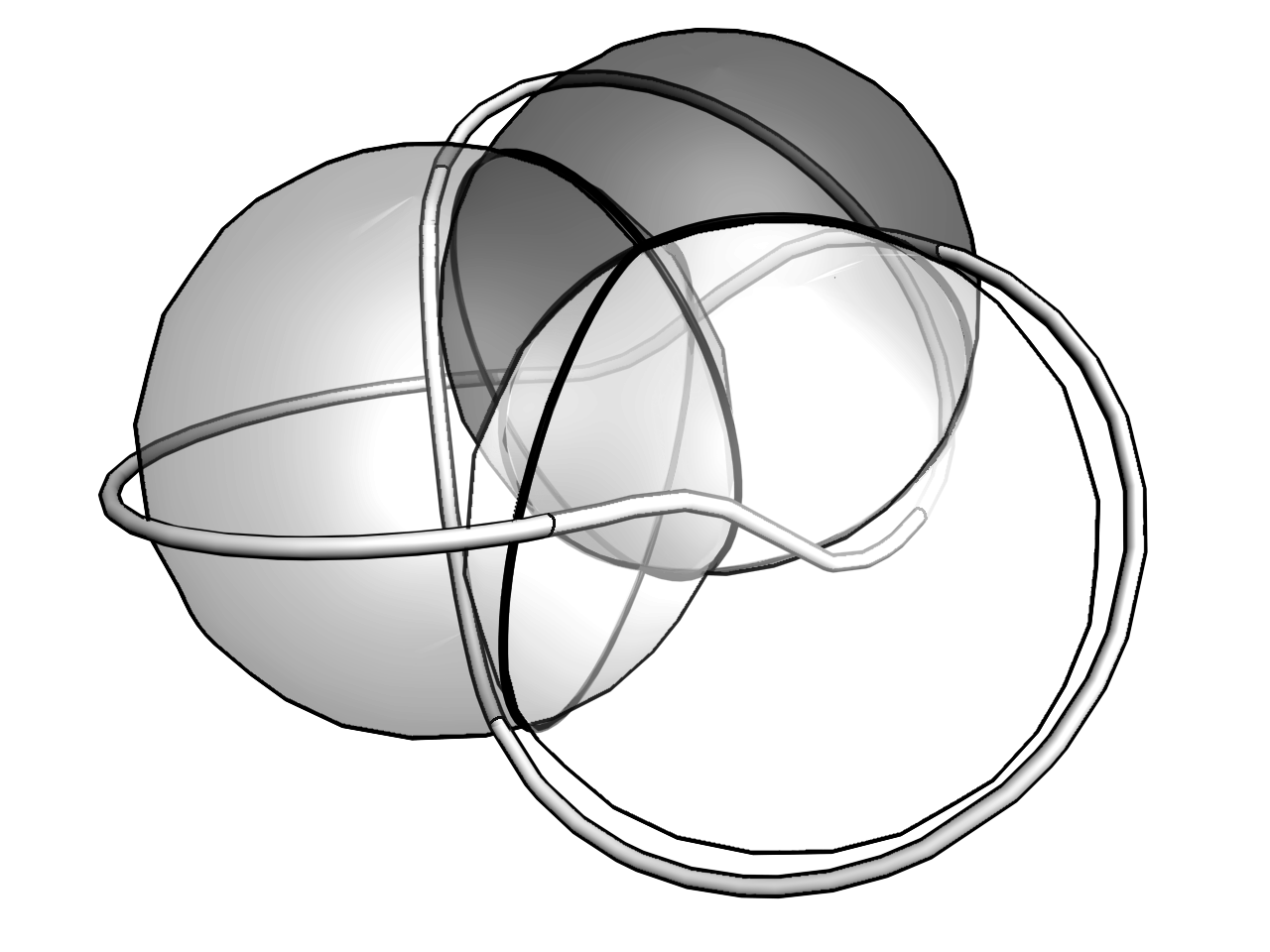}
  }
  \hfill
  \subfigure[]{
    \includegraphics[width=0.35\textwidth]{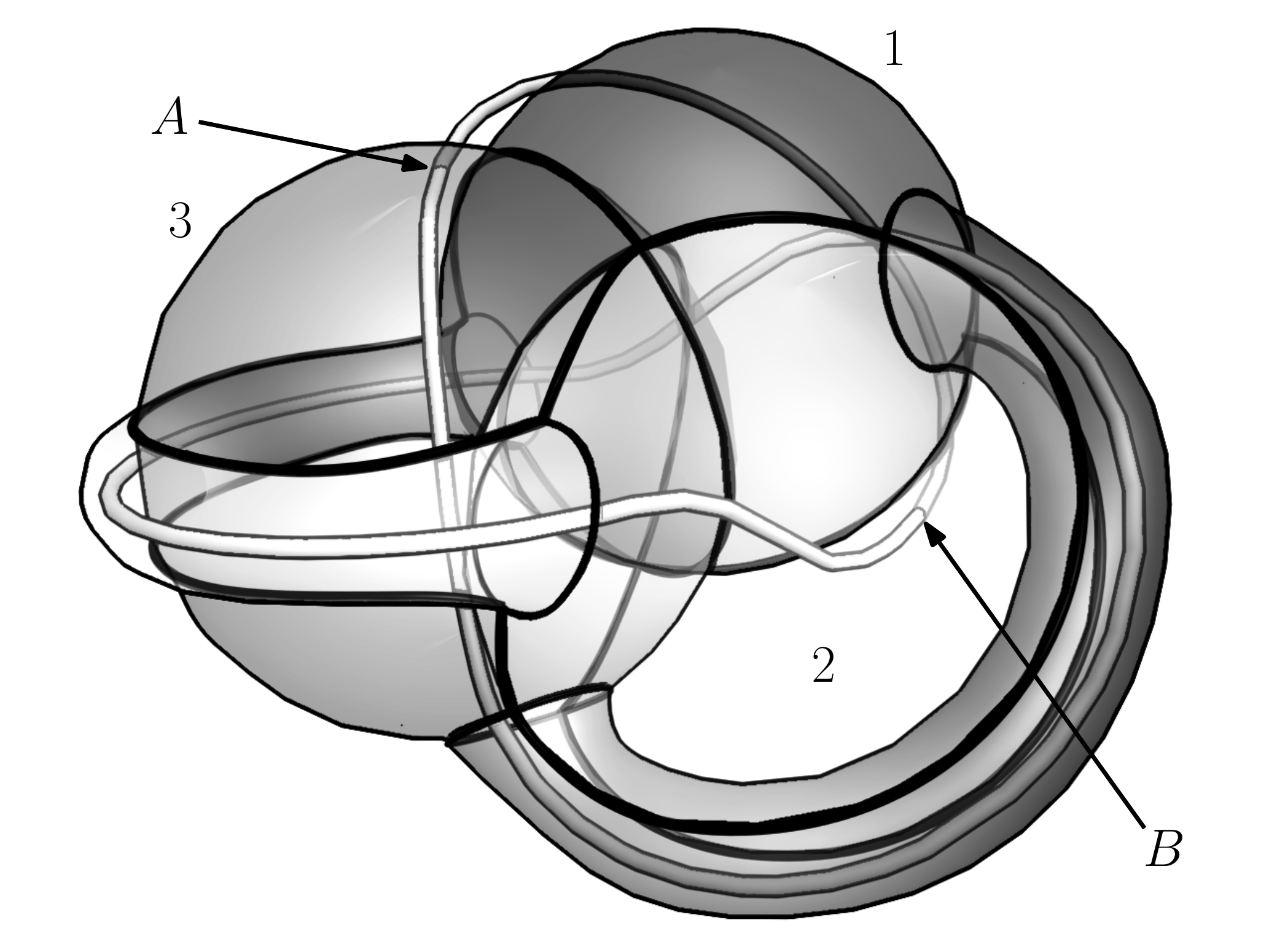}
  }

  \caption{Banchoff's sphere splitting the trefoil}
  \label{fig:trefoil-banchoff}
\end{figure}

\begin{figure}
  \centering
  \includegraphics{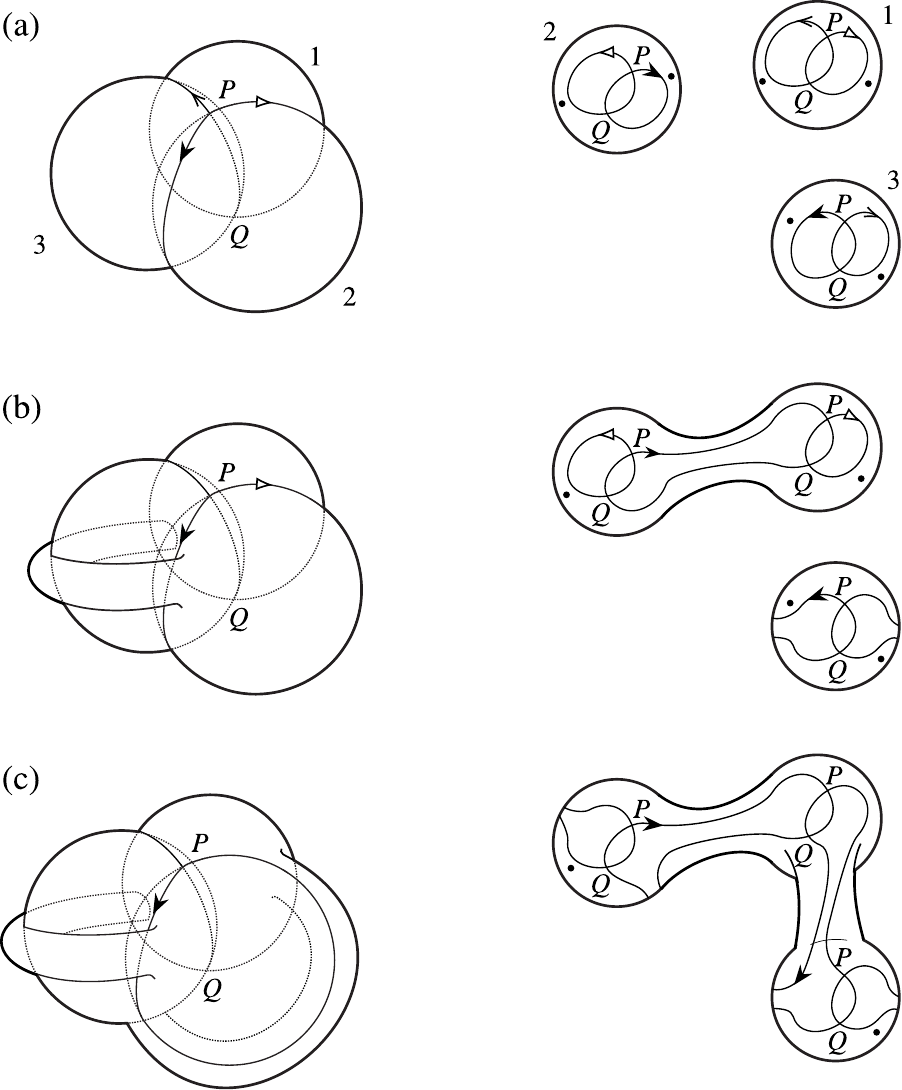}
  \caption{Constructing Banchoff's sphere: (a) starting from a bunch of 
    three $2$-spheres, (b, c) we add tubes to build a Dehn sphere. In the right 
    column we can see how the corresponding diagram changes.}
  \label{fig:figs_banchoffTrefoil}
\end{figure}

\begin{figure}
  \noindent%
  \includegraphics{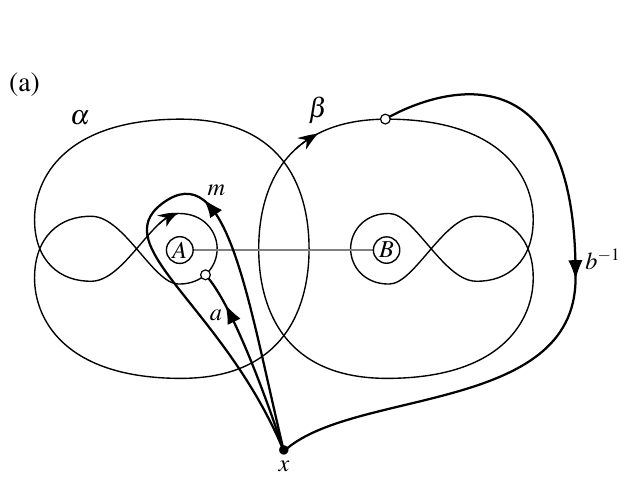}%
  \hfill%
  \includegraphics{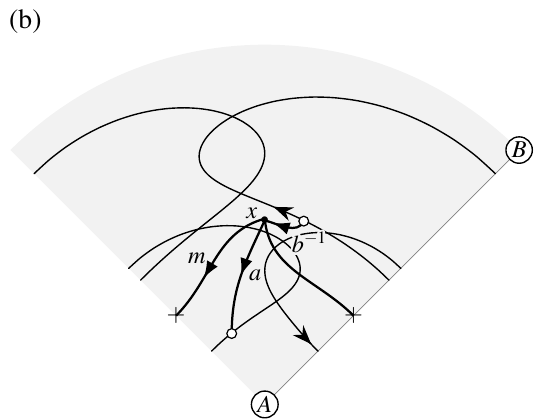}%
  
  \caption{(a) Banchoff's sphere diagram with the loops selected to generate 
    the fundamental group of the trefoil knot complement.
    (b) The fan $\Delta$ obtained cutting previous diagram along the thin 
    horizontal line between point $A$ and $B$.}
  \label{fig:banchoff}
\end{figure}

Let $K$ be the trefoil knot lying in $S^3$ with a $2\pi/3$
rotational symmetry as in Figure~\ref{fig:trefoil-banchoff}(a).
A filling Dehn sphere in $S^3$ that
diametrically splits $K$ can be constructed as follows. Look at the embedded 
$2$-sphere of Figure~\ref{fig:trefoil-banchoff}(a),
whose interior intersects $K$ in an unknotted arc, and take another two 
copies of it, each one located at each ``petal'' of the trefoil knot. These 
three
embedded $2$-spheres in $S^3$ intersect themselves and $K$ as in
Figure~\ref{fig:trefoil-banchoff}(b), and they form a filling collection of 
spheres $\Sigma$ 
in $S^3$. Two Banchoff \emph{type 1 surgeries}~\cite{Banchoff} 
between the $2$-spheres transform this filling collection of spheres
into a filling Dehn sphere $\Sigma_B$ of $S^3$ (see~\cite{Shima}) which
is called \emph{Banchoff's sphere} in \cite{peazolibro,tesis}.
According to~\cite{peazolibro,tesis}, it is one of the three
unique filling Dehn spheres of $S^3$ with only two
triple points.
Moreover, if the two surgeries are taken following the knot $K$ as it is 
indicated in Figure~\ref{fig:trefoil-banchoff}(c), then $\Sigma_B$ 
diametrically splits $K$. Figure~\ref{fig:figs_banchoffTrefoil} shows how its 
Johansson diagram $\D$ is obtained from the singular set of $\Sigma$. The dots 
in the right-hand side of the picture represent the intersection of the surface 
with the trefoil knot. Figure~\ref{fig:banchoff}(a) shows the common 
representation of $\D$ where one point of the sphere has been sent to infinity.

Let $f:S^2\to S^3$ be a parametrization of $\Sigma_B$. 
The two curves $\alpha,\beta$ of the diagram $\D$ verify $\beta=\tau\alpha$, 
and they must be 
identified following the arrows in the obvious way. The preimages by $f$ of the 
two intersection points $A,B$ of $\Sigma_B$ with $K$ are the points also 
denoted 
by $A,B$ in $S^2$, see Figure~\ref{fig:banchoff}(a). When the notation does 
not lead to confusion, we will use 
the same names to the objects in $\Sigma_B$ and their preimages in $S^2$.

The fundamental group $\pi_1(\Sigma_B- \{A,B\},x)\simeq\pi_K$ based at 
the point $x$ is generated by the loops $m$ and $c$, where:
\begin{itemize}
  
  \item $m$ is the generator of $\pi_1(S^2-\{A,B\})$ depicted in 
    Figure~\ref{fig:banchoff}(a); and
  
  \item the loop $c=a\,b^{-1}=\alpha^{\#}$ dual to the curve $\alpha$ of $\D$,
    where $a$ and $b$ are the paths depicted in
    Figure~\ref{fig:banchoff}(a) joining $x$ with related points on $\alpha$ 
    and $\beta$ respectively.
    
\end{itemize}
Note that the loop $m$ in $\Sigma_B$ is homotopic to a meridian of 
$K$.

After computing for $\Sigma_B-\{A,B\}$ the presentation of its 
fundamental group given in \cite{spectrum}, we obtain
\begin{equation}
  \label{ec:pi_1(M-Trefoil)}
  \pi_K\simeq \langle\, m,c\mid mcm=cm^{-1}c \,\rangle.
\end{equation}
It is straightforward to see that this group is isomorphic to the standard 
presentations of the trefoil knot group.

\section{Cyclic branched covers over the trefoil knot}
\label{sec:branched-covers}

\subsection{Johansson diagrams and fundamental group}

Let $p:\widehat{M}_n\to S^3$ be the $n$-fold ($n<\infty$) cyclic covering of 
$S^3$ branched over $K$.
By Theorem~\ref{thm:filling-lift-to-filling}, Banchoff's sphere 
$\Sigma_B\subset S^3$ lifts to a filling Dehn sphere 
$\widehat{\Sigma}_B$ of $\widehat{M}_n$.

We use the same notation as in Section~\ref{sec:splitting}.
Take $p^{-1}(x)=\{x_1,\ldots,x_n\}$, 
and denote also by $\{x_1,\ldots,x_n\}$ the corresponding
points in $\widehat{S}$. Let $\rho:\pi_K\to \Omega_n$ be
the monodromy homomorphism associated with $p$.
Since $p$ is cyclic, $\rho$ sends $\pi_K$ onto a cyclic subgroup 
$C_n$ of $\Omega_n$. The fact that $C_n$ is abelian and the relation in 
\eqref{ec:pi_1(M-Trefoil)} imply that $\rho(c)=\rho(m)^3$.
Therefore $C_n=\langle\rho(m)\rangle$. Since $C_n$ must act transitively
on $\{x_1,\ldots, x_n\}$,
$\rho(m)$ must be a cycle of order $n$. If we
identify $\Omega_n$ with the permutation group of the subscripts 
$\{1,\ldots,n\}$ in the
natural way, renaming $\{x_1,\ldots,x_n\}$ if necessary,
we can assume that $\rho(m)=(1,2,\ldots, n)$. In the following paragraphs all 
the subscripts are considered modulo $n$.

The loop $m$ generates the fundamental group of $S-\{A,B\}$, 
which is infinite cyclic. Therefore, the monodromy homomorphism $\rho_S$ is
given by $m\mapsto (1,2,\ldots, n)$. If $m_i$ is the lift of $m$ to 
$\widehat{S}$ based at $x_i$, by the election of $\rho(m)$, the lifted path 
$m_i$ starting at $x_i$ must have its endpoint at $m_{i+1}$.
\begin{figure}
  \centering
  \includegraphics{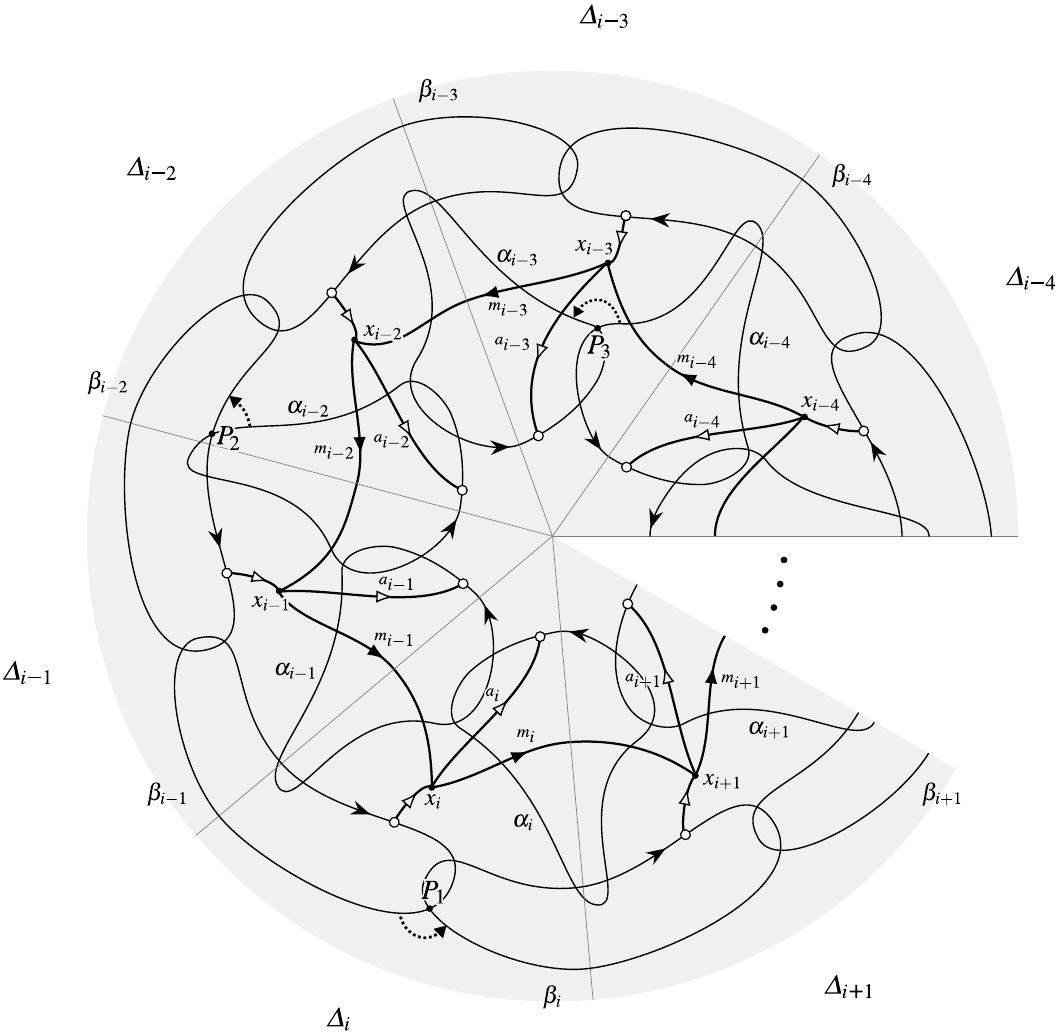}
  
  \caption{Building the diagram of a cyclic branched covering. The lifts of
    $m$ are the thick paths marked with triangle arrows. Those of 
    $a$ are the thick paths marked with triangle empty arrows. 
    The lifts of $b$ are unlabelled, but the path 
    $b_i^{-1}$ is the one ending at $x_i$ also marked 
    with an empty triangle arrow.}
  \label{fig:general_cyclic}
\end{figure}

After: (i) cutting the diagram of Figure~\ref{fig:banchoff}(a) along the line 
that connects $A$ and $B$ in the same figure; and (ii) sending $B$ to infinity; 
the \emph{fan} $\Delta$ of Figure~\ref{fig:banchoff}(b) is obtained. The domain 
$\widehat{S}$ of $\widehat{\Sigma}_B$ is obtained by cyclically gluing $n$ 
copies of $\Delta$. If $\Delta_i$ is the copy of $\Delta$ that contains $x_i$, 
$i=1,\ldots,n$, the diagram $\widehat{\D}$ of $\widehat{\Sigma}_B$ is built up 
with 
$\Delta_1,\ldots,\Delta_n$ glued together counterclockwise (the direction does 
not matter, but we choose it according to the direction of $m$ in the diagram 
to visualize it better, see Figure~\ref{fig:general_cyclic}).

As it is explained in Section~\ref{sec:splitting}, the lifts of $c$ to 
$\widehat{\Sigma}_B$ describe how the curves of the diagram $\widehat{\D}$ 
become 
identified in $\widehat{\Sigma}_B$. Let $a_i$, $b_i$ and $c_i$ be the lifts of 
$a$, 
$b$ and $c$ respectively based at $x_i$. Let $\alpha_i$ be the lift of $\alpha$ 
to $\widehat{S}$ at which $a_i$ has its endpoint. In the same way, let $\beta_i$ 
be 
the lift of $\beta$ to $\widehat{S}$ at which $b_i$ has its endpoint. With this 
notation, the situation in $\widehat{S}$ is as depicted in 
Figure~\ref{fig:general_cyclic}. The path $c_i$ connects $x_i$ with 
$x_{\rho(c)(i)}=x_{i+3}$, crossing a double curve of $\widehat{\Sigma}_B$. 
Hence, 
the curve at which $a_i$ ends must be identified with the one at which 
$b_{i+3}^{-1}$ starts and therefore $\tau\alpha_i = \beta_{i+2}$ (see 
Figure~\ref{fig:general_cyclic}). This allows 
us to proof:

\begin{lemma}[See~{\cite[Theorem~2.1]{CHK}}]
  The fundamental group of $\widehat{M}_n$ is isomorphic to the Sieradski group
  \[
    \mathcal{S}(n) = \langle\, g_1,\ldots,g_n \mid 
      \text{$g_i = g_{i-1}\,g_{i+1}$ for $i=1,\ldots,n$}\,\rangle,
  \]
  where the indices are taken modulo $n$.
\end{lemma}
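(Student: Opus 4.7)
The plan is to derive the Sieradski presentation from the trefoil group $\pi_K = \langle m, c \mid mcm = cm^{-1}c \rangle$ of~\eqref{ec:pi_1(M-Trefoil)} by applying the Reidemeister--Schreier procedure to the index-$n$ subgroup $\ker\rho \subset \pi_K$, and then imposing the relation that kills the meridian of $\widehat{K} = p^{-1}(K)$. This is precisely the presentation of $\pi_1(\widehat{M}_n)$ provided by surfacewise and $\widehat{\D}$-dual generators in \cite{spectrum}, read off from the diagram in Figure~\ref{fig:general_cyclic}: the punctured domain $\widehat{S}\setminus\{\widehat{A},\widehat{B}\}$ has fundamental group $\mathbb{Z}$, generated by a single surfacewise loop $\widehat{m}$ whose image downstairs is $m^n \in \pi_K$, and each of the $n$ pairs of sister curves $(\alpha_i,\beta_{i+2})$ of $\widehat{\D}$ contributes one dual generator.

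First I would fix the Schreier transversal $\{1, m, m^2, \ldots, m^{n-1}\}$ of $\ker\rho$ in $\pi_K$ and record the Schreier generators. The only nontrivial generator coming from $m$ is $\mu := m^n$, and for each $i \in \{1, \ldots, n\}$ the generator coming from $c$ is
\[
g_i := m^{i-1}\, c\, m^{-(i+2)},
\]
with indices taken modulo $n$. The $g_i$ are in natural bijection with the sister-curve pairs of $\widehat{\D}$ and, via $p_*$, coincide with the $\widehat{\D}$-dual generators of $\pi_1(\widehat{M}_n \setminus \widehat{K})$. Passing from $\pi_1(\widehat{M}_n \setminus \widehat{K})$ to $\pi_1(\widehat{M}_n)$ amounts to imposing $\mu = 1$, since $\widehat{m}$ represents a meridian of $\widehat{K}$ that bounds a meridional disk in the branched cover.

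Next I would rewrite the single defining relator $r = mcm\,(cm^{-1}c)^{-1}$ of $\pi_K$ conjugated by each transversal element $m^{i-1}$. The key algebraic identity is
\[
g_{i-1}\, g_{i+1}
= \bigl(m^{i-2}\, c\, m^{-(i+1)}\bigr)\bigl(m^{i}\, c\, m^{-(i+3)}\bigr)
= m^{i-2}\,(cm^{-1}c)\,m^{-(i+3)}
= m^{i-2}\,(mcm)\,m^{-(i+3)}
= g_i,
\]
which uses the trefoil relation $cm^{-1}c = mcm$. After imposing $\mu = 1$, each of the $n$ Reidemeister--Schreier relations obtained in this way is exactly the Sieradski relation $g_i = g_{i-1}\,g_{i+1}$, and collecting them yields the presentation
\[
\pi_1(\widehat{M}_n) \;=\; \langle\, g_1, \ldots, g_n \mid g_i = g_{i-1}\, g_{i+1},\ i = 1, \ldots, n \,\rangle \;=\; \mathcal{S}(n).
\]

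The main obstacle is bookkeeping rather than conceptual: one must verify that no residual powers of $\mu$ survive in the rewritten relators (automatic once $\mu = 1$ is imposed, but requiring care for small~$n$, where exponents of $m$ arising in the rewriting may exceed $n$) and that the $n$ Reidemeister--Schreier relators produced do not introduce spurious relations beyond the cyclic family of Sieradski ones. The cyclic symmetry $g_i \mapsto g_{i+1}$ induced by the deck transformation of $p$ makes this transparent: once the identity is established for one index, the remaining $n-1$ follow by applying the deck action, in exact correspondence with the rotational arrangement of the fans $\Delta_1, \ldots, \Delta_n$ visible in Figure~\ref{fig:general_cyclic}.
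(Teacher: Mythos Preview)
Your argument is correct, but it takes a genuinely different route from the paper's. The paper works entirely inside the Johansson diagram $\widehat{\D}$: it takes the $\widehat{\D}$-dual loops $\beta_1^{\#},\ldots,\beta_n^{\#}$ as generators and then identifies the relators by locating the triplets of $\widehat{\D}$ geometrically---tracing, for each triple point, the three crossings $P_1,P_2,P_3$ through the fans $\Delta_i,\Delta_{i-2},\Delta_{i-3}$ and invoking the triple-point relation from \cite[Chp.~4]{peazolibro} to obtain $\beta_{i-1}^{\#}=\beta_{i}^{\#}\beta_{i-2}^{\#}$, which becomes the Sieradski relation after the reindexing $g_i=\beta_{n-i}^{\#}$. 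You instead bypass the diagram altogether and run Reidemeister--Schreier on the presentation~\eqref{ec:pi_1(M-Trefoil)} with transversal $\{1,m,\ldots,m^{n-1}\}$, then kill the lifted meridian $\mu=m^n$; the Sieradski relations then drop out of the single algebraic identity $g_{i-1}g_{i+1}=m^{i-2}(cm^{-1}c)m^{-(i+3)}=m^{i-2}(mcm)m^{-(i+3)}=g_i$. Your approach is more elementary and self-contained, requiring no knowledge of the triple-point calculus for Johansson diagrams, and it makes the role of the trefoil relation completely transparent. The paper's approach, on the other hand, is the point of the paper: it exhibits the Sieradski presentation as the native output of the filling-Dehn-surface machinery, with generators and relators read off visually from Figure~\ref{fig:general_cyclic}. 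The two are of course related---your Schreier generators $g_i$ are exactly the $\widehat{\D}$-dual generators, and the $n$ Schreier relators correspond to the $n$ orbits of triple points under the deck action---but only the paper's argument demonstrates that the diagrammatic method actually delivers this presentation without external input.
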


\begin{proof}
  The fundamental group of $\widehat{M}_n$ coincides with that of 
  $\widehat{\Sigma}_B$. Since $\widehat{S}$ is a $2$-sphere, we can use the 
  presentation given in~\cite[Chp. 4]{peazolibro}. In this presentation, the 
  fundamental group is generated by the dual
  loops to $\alpha_i$ and $\beta_i$, $i=1,\ldots,n$, and the relators are given 
  by the triplets of $\widehat{\D}$. By construction, dual 
  loops to sister curves are inverse to each other, and so 
$\pi_1(\widehat{M}_n)$
  is generated just by the dual loops $\beta_1^{\#},\ldots,\beta_n^{\#}$.
 
  Let us determine the triplets of $\widehat{\D}$. Take the point $P_1$
  in the $i$-th fan $\Delta_i$ of 
  Figure~\ref{fig:general_cyclic}. The curves $\beta_{i-1}$ and
  $\beta_i$ intersect at $P_1$. Since $P_1$ is the third crossing
  after the arrow in $\beta_i$, it must be identified with the third crossing
  after the arrow in $\tau\beta_{i} = \alpha_{i-2}$, which is the point $P_2$ 
  in $\Delta_{i-2}$. In the same way, since
  $P_2$ is the fifth crossing after the arrow in $\beta_{i-2}$, it must be
  identified with the fifth crossing after the arrow in 
  $\tau\beta_{i-2}=\alpha_{i-4}$, which is the point $P_3$ 
  in $\Delta_{i-3}$. Finally,
  since $P_3$ is the second crossing after the arrow in $\alpha_{i-3}$,
  it must be identified with the second crossing after the arrow in
  $\tau\alpha_{i-3}=\beta_{i-1}$, which is, as expected, $P_1$.
  
  For each $j=1,2,3$ take a small path $\delta_j$ near $P_j$ as
  the dotted arcs in 
  Figure~\ref{fig:general_cyclic}, in such a way that the endpoint
  of $\delta_j$ is related by $\widehat{f}$ with the starting point of 
$\delta_{j+1}$, 
  where the subscripts are taken modulo $3$. Then the loop
    $(\widehat{f}\circ \delta_1)\,
      (\widehat{f}\circ \delta_2)\,(\widehat{f}\circ \delta_3)$
  is contractible in $\widehat{\Sigma}_B$. Hence, the product of 
  $\widehat{\D}$-dual loops
    $\beta_{i}^{\#}\,\beta_{i-2}^{\#}\,\alpha_{i-3}^{\#} =
      \beta_{i}^{\#}\,\beta_{i-2}^{\#}\,\left(\beta_{i-1}^{\#}\right)^{-1}$
  is also contractible (full details in~\cite[Chp. 4]{peazolibro}).
  Therefore $\beta_{i-1}^{\#}=\beta_{i}^{\#}\,\beta_{i-2}^{\#}$
  in $\pi_1(\widehat{M}_n,x)$.
  
  It is straightforward to see that all the relations are of this form. 
  In~\cite[Chp. 4]{peazolibro} it is proved that these are \emph{all} the 
  nontrivial relations. Taking $g_i=\beta_{n-i}^{\#}$, $i=1,\ldots,n$, the 
  presentation of $\mathcal{S}(n)$ of the statement is obtained.
\end{proof}

\begin{example}
  \begin{figure}
    \centering
    \includegraphics{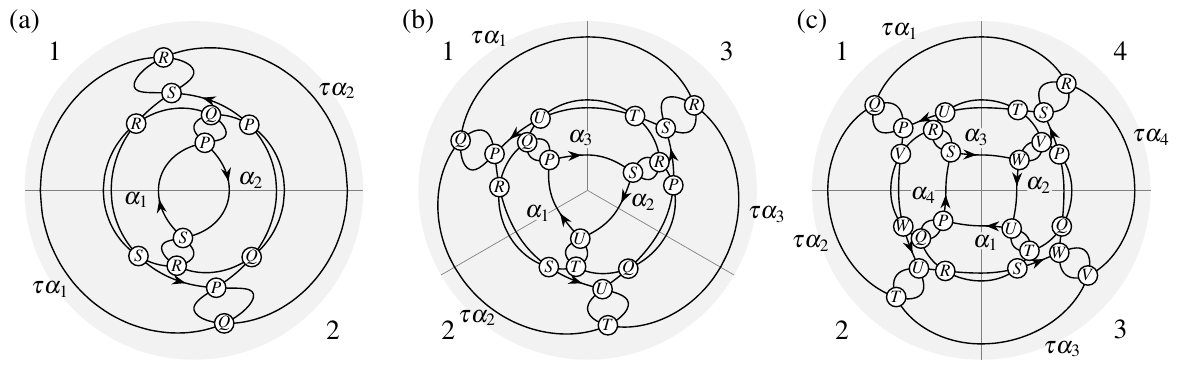}
    
    \caption{Diagrams for the first cyclic coverings branched over the trefoil:
      (a) The lens space $L(3,1)$ as the cyclic $2$-fold covering of $S^3$ 
      branched over $K$; (b) the $3$-fold covering; and (c) the $4$-fold 
      covering.
    }
    \label{fig:cyclic}
  \end{figure}
  Consider $p:\widehat{M}_2\to S^3$ given by the presentation $m\mapsto(1,2)$, 
  $c\mapsto(1,2)$, see Figure~\ref{fig:cyclic}(a). The previous descriptions 
  allows us to conclude that $\pi_1(M_2)=\Z_3$, in fact $\widehat{M}_2$ is 
$L(3,1)$. 
  Figure~\ref{fig:cyclic}(b) shows the diagram constructed for $\widehat{M}_3$ 
given by 
  the presentation $m\mapsto(1,2,3)$, $c\mapsto 1_{\Omega_3}$. The 
  fundamental group is isomorphic to the group of the quaternions $Q_8$. Hence, 
  $\widehat{M}_3$ is a prism manifold, which are characterized by their 
  fundamental 
  group. According to the notation of~\cite{HKMR} this is $M(2,1)$, also called 
  the Quaternionic Space~\cite{montetese}. Consider now the presentation 
  $m\mapsto(1,2,3,4)$, $c\mapsto (1,4,3,2)$, the fundamental group of 
$\widehat{M}_4$ is 
  $SL_2(\Z_3) \cong \Z_3\rtimes Q_8$. This is a tetrahedral manifold, which are 
  also characterized by their fundamental group. In this case the manifold is 
  the Octahedral space~\cite{montetese}.
\end{example}

\subsection{The Sieradski complex}

\begin{figure}
\centering
  \includegraphics{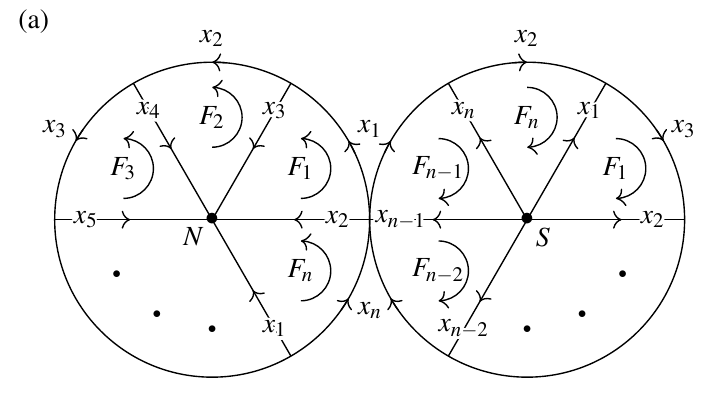}
  \hfill
  \includegraphics{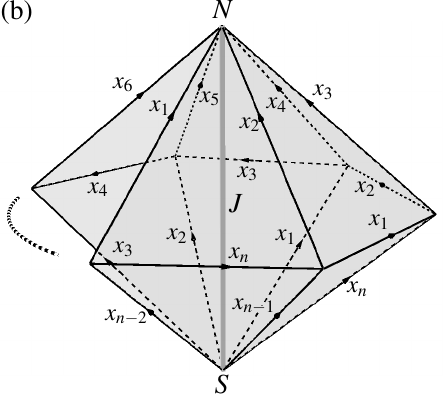}

  \caption{The Sieradski complex on the sphere $S^2=\partial D^3$.}
  \label{fig:sieradskiCn}
\end{figure}
The family of polyhedra with identified faces depicted in 
Figure~\ref{fig:sieradskiCn} was introduced in \cite{sieradski}. The quotient 
spaces of these polyhedra is a family of $3$-manifolds $M_n$, 
with $n\geq 2$, whose fundamental 
groups are the Sieradski groups $\mathcal{S}(n)$. In~\cite{CHK} it is proved 
that $M_n$ is the $n$-fold cyclic branched cover over the trefoil knot 
$\widehat{M}_n$. As it has been shown above, Sieradski groups naturally appear 
in our 
construction, so it is natural to expect that 
the lifts of Banchoff sphere have some relation with Sieradski 
polyhedra (see Figure~\ref{fig:sieradskiCn}). In fact, Banchoff sphere allows us 
to give an alternative 
proof of Theorem~2.1 of~\cite{CHK}.
\begin{figure}
  \centering
  \subfigure[]{
    \includegraphics[width=2cm]{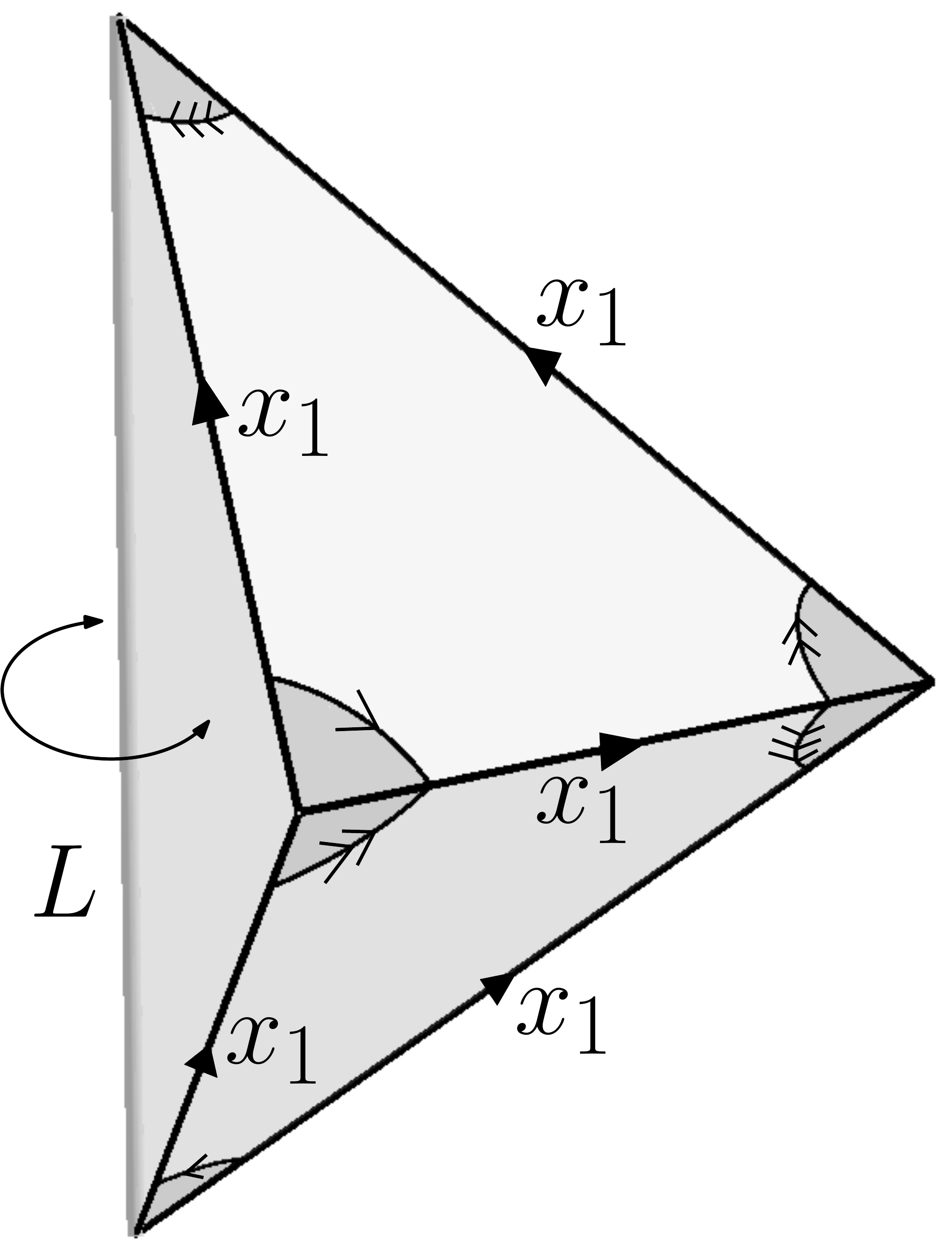}
  }
  \hfill
  \subfigure[]{
    \includegraphics[width=2cm]{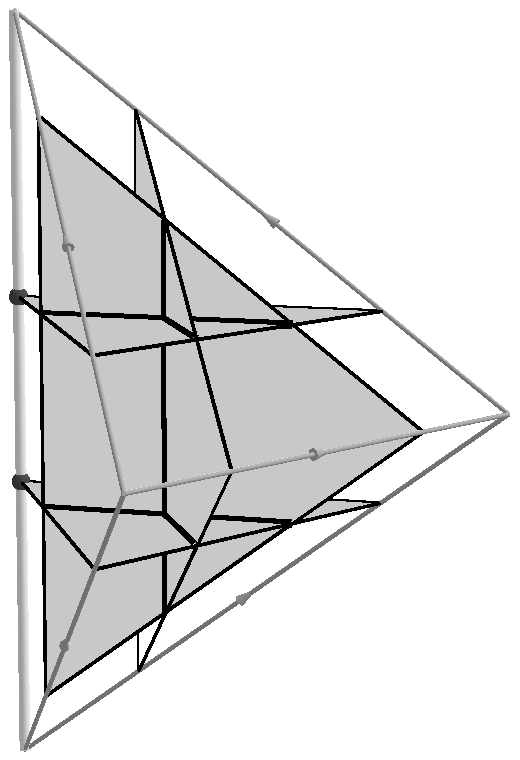}
  }
  \hfill
  \subfigure[]{
    \includegraphics[width=65mm]{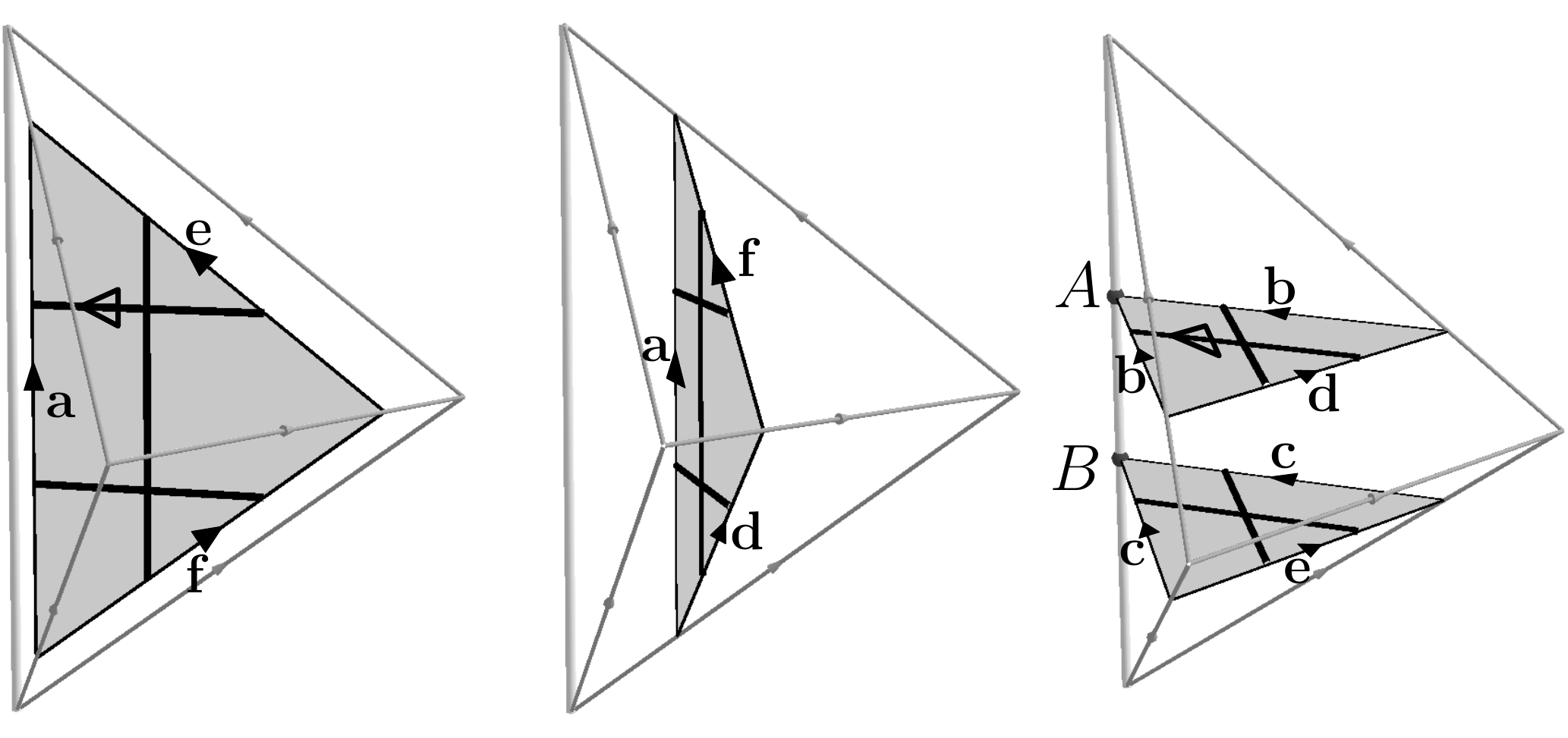}
  }
  
  \subfigure[]{
    \includegraphics[height=6cm]{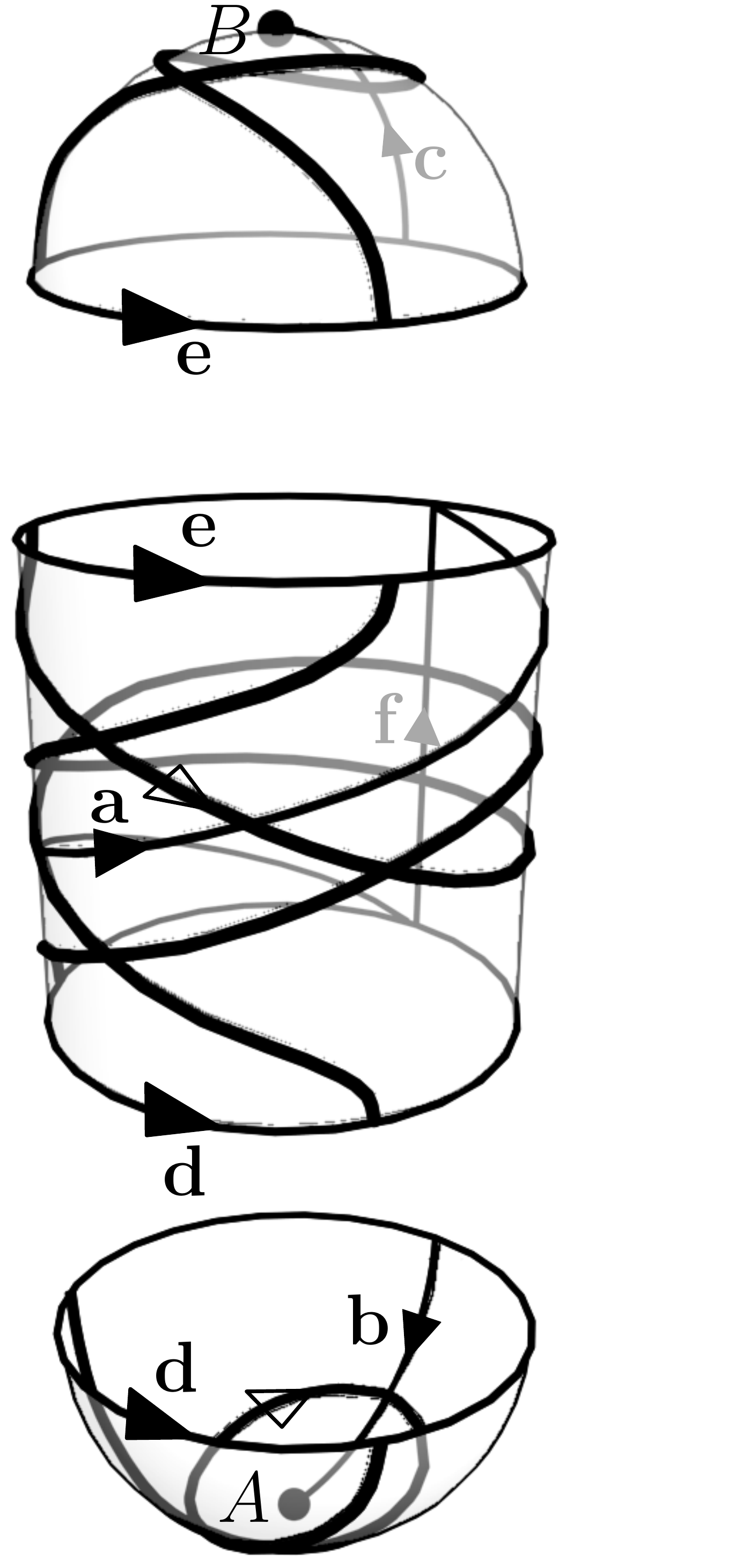}
  }
  \hspace{3cm}
  \subfigure[]{
    \includegraphics[height=35mm]{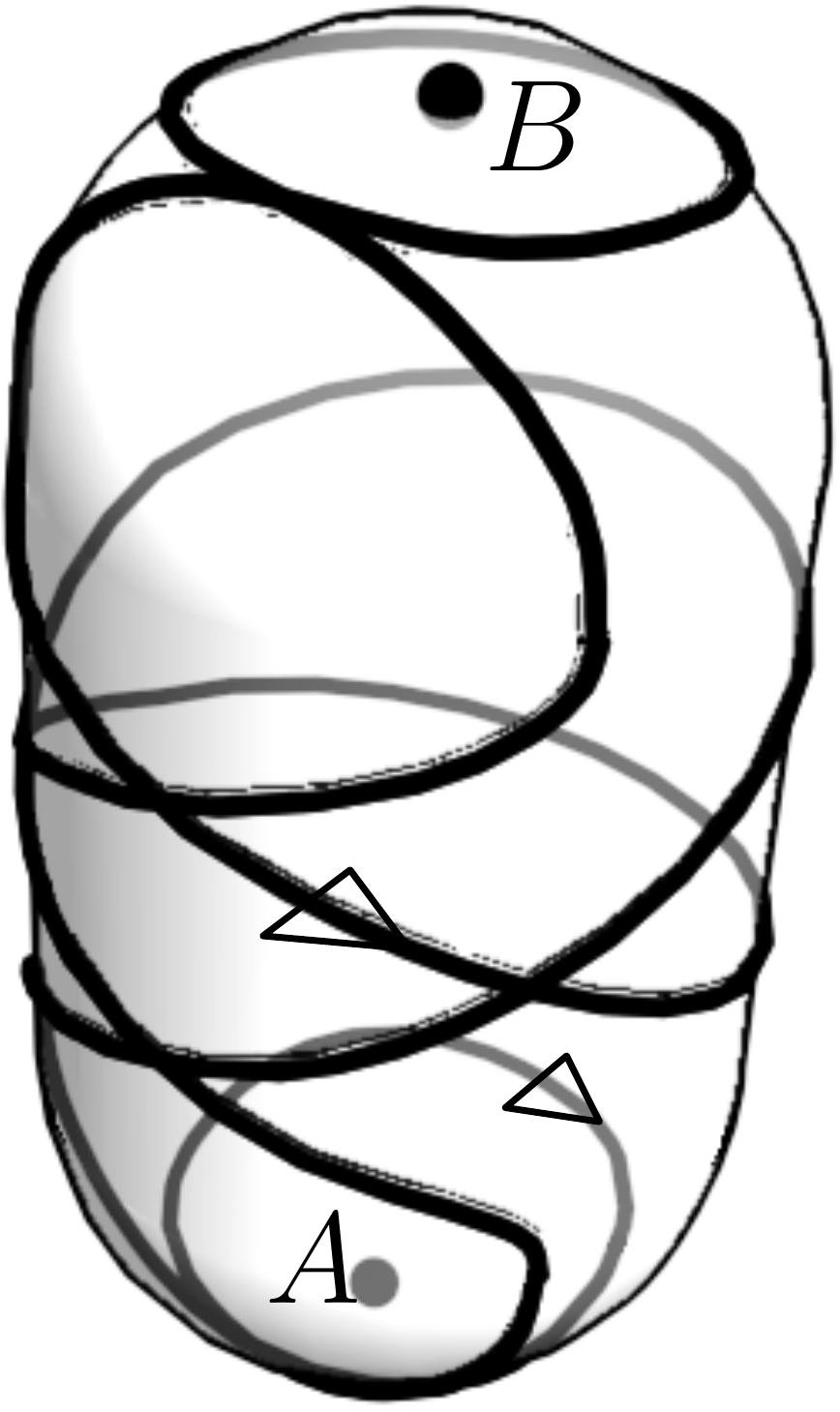}
  }

  \caption{Banchoff's sphere splitting the trefoil in Sieradski complex for 
    $n=1$.}
  \label{fig:Sieradski-bipyramid}
\end{figure}

\begin{theorem}[\cite{CHK}]
 For each $n=2,3,\ldots$ the $3$-manifold $M_n$ is the $n$-fold cyclic cover of 
 $S^3$ branched over the trefoil.
\end{theorem}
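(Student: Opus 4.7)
The plan is to identify both $M_n$ and the cyclic branched cover $\widehat{M}_n$ through a single filling Dehn sphere. More precisely, the strategy is to exhibit a filling Dehn sphere $\Sigma_n\subset M_n$ whose Johansson diagram with two marked intersection points coincides, as a diagram of a knot, with the diagram $\widehat{\D}$ of $\widehat{\Sigma}_B\subset\widehat{M}_n$ constructed in the previous subsection. Then Proposition~\ref{prop:diagram-represent-knots} will yield a homeomorphism $M_n\to\widehat{M}_n$ mapping one marked pair of points onto the other, and the equivariance under the obvious $\Z_n$-rotation of the Sieradski polyhedron will upgrade this homeomorphism to an isomorphism of branched covers.

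First I would construct $\Sigma_n$ geometrically inside the Sieradski polyhedron. Figure~\ref{fig:Sieradski-bipyramid} does this explicitly in the base case: Banchoff's sphere sits inside a single Sieradski wedge as a fan-like Dehn surface whose restriction to the boundary of the wedge is compatible with the Sieradski face identifications. For general $n$, the same local picture is repeated in each of the $n$ congruent wedges that cyclically tile the bipyramid $D^3$. Verifying that these local fan surfaces glue to a bona fide filling Dehn sphere $\Sigma_n\subset M_n$ reduces to checking on the boundary 2-sphere $\partial D^3$ that, under the Sieradski face pairings, the pieces of $\Sigma_n$ in adjacent wedges meet transversally and fit together to satisfy the cellularity condition defining a filling Dehn surface.

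Next I would compute the Johansson diagram of $\Sigma_n$. By construction the diagram is a cyclic assembly of $n$ copies of the fan $\Delta$ from Figure~\ref{fig:banchoff}(b), so as a set of curves it matches $\widehat{\D}$ exactly. What must be checked is the sistering: I would label the curves $\alpha_i,\beta_i$ in the $i$-th wedge as in Figure~\ref{fig:general_cyclic} and read off which endpoint of $\alpha_i$ is identified with which endpoint of $\beta_j$ via the Sieradski face-pairing. The expected outcome is the identification $\tau\alpha_i=\beta_{i+2}$ derived in the previous subsection from the monodromy $\rho(c)=(1,2,\ldots,n)^3$. Placing the two marked points at the apices of the bipyramid (where all wedges meet) produces precisely the distinguished points $A,B$ of the diagram $\widehat{\D}$.

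Once the diagrams agree, Proposition~\ref{prop:diagram-represent-knots} gives a homeomorphism $M_n\cong\widehat{M}_n$ carrying the axis of the bipyramid onto the lift of the trefoil. The natural rotational $\Z_n$-action on the Sieradski polyhedron cyclically permutes the wedges and therefore induces the shift $i\mapsto i+1$ on the fans, which is exactly the deck transformation of $\widehat{M}_n\to S^3$. Quotienting both sides by this action recovers $S^3$ and the trefoil $K$, completing the identification. The main obstacle will be the combinatorial verification in the second step: one has to bookkeep the Sieradski face-pairings carefully enough to confirm that the induced sistering on the diagram is the $+2$ shift rather than some other cyclic shift, and to match the marked points consistently. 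Once this bookkeeping is in place, the rest follows from the machinery already developed.
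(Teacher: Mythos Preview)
Your proposal is correct in outline, but it takes a different route from the paper's. The paper works \emph{downstairs}: it observes that the $2\pi/n$ rotation $r$ of the Sieradski bipyramid $\mathcal{P}_n$ about its axis $J$ respects the face pairings and hence descends to a $\Z_n$-action on $M_n$ with quotient $(M_1,L)$, where $M_1$ is the manifold coming from the single tetrahedron $\mathcal{P}_1$. The entire verification is then reduced to the base case: one builds a filling Dehn sphere in $M_1$ from four explicit pieces inside the tetrahedron, reads off its Johansson diagram (with the two marked points coming from the axis), and recognizes it as the Banchoff diagram of Figure~\ref{fig:banchoff}(a). Proposition~\ref{prop:diagram-represent-knots} then gives $(M_1,L)\cong(S^3,K)$, so $M_n\to M_1$ is automatically the $n$-fold cyclic cover branched over the trefoil.

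Your approach instead works \emph{upstairs}: you build $\Sigma_n\subset M_n$ for every $n$, match its diagram with the $\widehat{\D}$ already computed for $\widehat{M}_n$, and then argue equivariance to identify the covering maps. This is valid, but it carries more overhead: you must check the sistering $\tau\alpha_i=\beta_{i+2}$ from the Sieradski face pairings for general $n$ (the bookkeeping you flag), and you need the extra step that the homeomorphism produced by Proposition~\ref{prop:diagram-represent-knots} can be taken $\Z_n$-equivariant because the diagram isomorphism is. The paper avoids both issues by collapsing everything to $n=1$, where there is only one fan and no equivariance to check; the remark after the proof that $\widehat{\Sigma}_B$ can be rebuilt inside $\mathcal{P}_n$ from $n$ copies of the tetrahedral pieces is then a corollary rather than part of the argument.
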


\begin{proof}
  Fix an $n=2,3,\ldots$. For the Sieradski polyhedron $\mathcal{P}_n$,
  consider the $2\pi/n$ rotation $r$ around the vertical central axis $J$ 
  connecting the ``north and south poles'' $N$ and $S$ of $\mathcal{P}_n$.
  By the symmetry of the identification of points on 
  $\partial \mathcal{P}_n$, $r$ preserves these identifications ($r$ sends 
  identified points to identified points on $\partial \mathcal{P}_n$), and 
  therefore $r$ defines a homeomorphism of $M_n$. Moreover,
  the group generated by 
  $r$ is cyclic of order $n$. The quotient space of the pair $(M_n,J)$ under 
  the action of $\langle r \rangle$ is the pair $(M_1,L)$, where $M_1$ is the 
  manifold obtained after identifying the faces of the tetrahedron
  $\mathcal{P}_1$ of Figure~\ref{fig:Sieradski-bipyramid}(a) 
  in the following way: the two vertical faces must be identified by a rotation 
  around the vertical edge; and the other two faces must be identified in the 
  unique way in which the boundary edges become identified as indicated by the 
  arrows. Let $L$ denote the image of the vertical edge of the tetrahedron 
  after the identification.
  
  \begin{claim}
    The pair $(M_1,L)$ is homeomorphic to $(S^3,K)$.
  \end{claim}

  \begin{proofClaim}
    Take the immersed surface $\Sigma$ in $M_1$ which is the projection of the 
    surface depicted inside the tetrahedron in 
    Figure~\ref{fig:Sieradski-bipyramid}(b, c). The four pieces of this 
    surface become glued by the identification as it is indicated in 
    Figure~\ref{fig:Sieradski-bipyramid}(d, e), and the intersection of $L$ 
    with $\Sigma$ corresponds to the points $A$ and $B$ of the same figure.
    After some ambient isotopies, the diagram of $(\Sigma,L)$ 
    becomes the one of Figure~\ref{fig:banchoff}(a). It is not 
    difficult to check 
    that $\Sigma$ fills $M_1$. Therefore, $M_1$ is $S^3$ and $\Sigma$ is 
    Banchoff's sphere. Since the intersection of $L$ and $\Sigma$ coincides 
    with the intersection of the trefoil and $\Sigma_B$, by
    Proposition~\ref{prop:diagram-represent-knots}
    we conclude also that $L$ is the trefoil knot.\qed
  \end{proofClaim}
  
  The unique points of $M_n$ that become fixed by $r$ are those on $J$. 
  Therefore, the covering of $M_n$ over $M_1$ defined by $\langle r\rangle$ is 
  a $n$-fold covering of $S^3$ branched over $K$. 
  Since the group $\langle r\rangle$ of deck transformations is cyclic of 
  order $n$, it turns out that $M_n$ is the $n$-fold cyclic covering of $S^3$ 
  branched over the trefoil knot.
\end{proof}

The lift of Banchoff's sphere to $M_n$ can be built inside 
$\mathcal{P}_n$ by cyclically gluing $n$ copies
of the pieces of the surface of Figure~\ref{fig:Sieradski-bipyramid}(b) around
the axis $J$ (Figure~\ref{fig:sieradskiCn}(b)).

\section{Other examples}\label{sec:other-examples}

\subsection{Locally cyclic branched covers}
\label{sub:trefoil-locally-cyclic}

Since all the locally cyclic coverings of 2 and 3 sheets of $S^3$ branched over 
$K$ are in fact cyclic, the first non-cyclic example $p:M\to S^3$ is the one 
given by the representation of $\pi_k$ into $\Omega_4$ defined by
$m\mapsto(1,2,3,4)$ and $ c\mapsto(1,2)$.
The construction of the diagram is as in the cyclic case.
Since the image of $m$ is a cycle of maximal length the 
lift of the Dehn surface has $S^2$ as domain, and $\rho(c)$ describes 
how to identify the curves of the diagram. The final diagram is the one of 
Figure~\ref{fig:locallycyclic}, which gives
\[
  \pi_1(M) \cong \langle\, \alpha_1,\alpha_2,\alpha_3,\alpha_4 \mid
    \alpha_1\alpha_2^{-2},
      \alpha_1\alpha_3\alpha_4^{-1},
        \alpha_1\alpha_3^{-1}\alpha_4,
          \alpha_4\alpha_2^{-1}\alpha_3\,
  \rangle\cong\Z_3\rtimes\Z_8.
\]
Therefore $M$ is the prism manifold $M(3,2)$ in the notation of~\cite{HKMR}.
Compare it with the $4$-fold cyclic covering depicted in 
Figure~\ref{fig:cyclic}(c).
\begin{figure}
  \centering
  \includegraphics{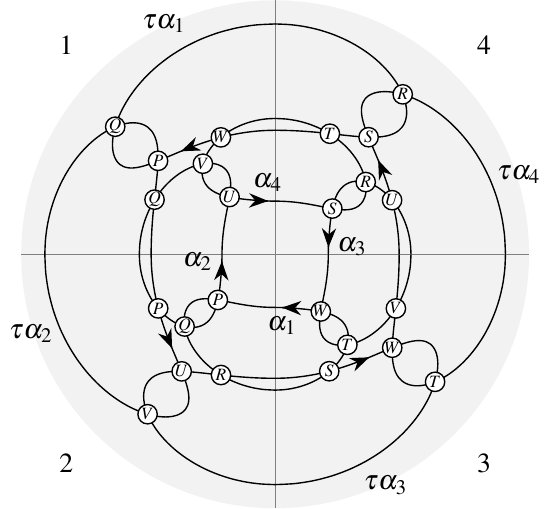}
  
  \caption{The diagram for the locally cyclic covering given by the
    presentation $m\mapsto (1,2,3,4)$, $a\mapsto (1,2)$.}
  \label{fig:locallycyclic}
\end{figure}

\subsection{The $3$-fold irregular cover}
\label{sub:trefoil-non-cyclic}

Let $p:\widehat{M}\to S^3$ be an $n$-fold branched covering over the trefoil 
$K$ with $n<\infty$.
In the previous sections we have seen how to construct the Johansson
diagram of a filling Dehn sphere of $\widehat{M}$ when
$\rho(m)$ is a cycle of order $n$, where $\rho$ is the monodromy of $p$.
It is also possible to construct the Johansson diagram of a filling Dehn sphere 
of $\widehat{M}$ in the general case, even if $\rho(m)$ is not a cycle of 
maximal length. We illustrate this case with the simplest example: the $3$-fold 
irregular covering over the trefoil. We use the same notation as in previous 
sections.

\begin{figure}
  \centering
  \includegraphics{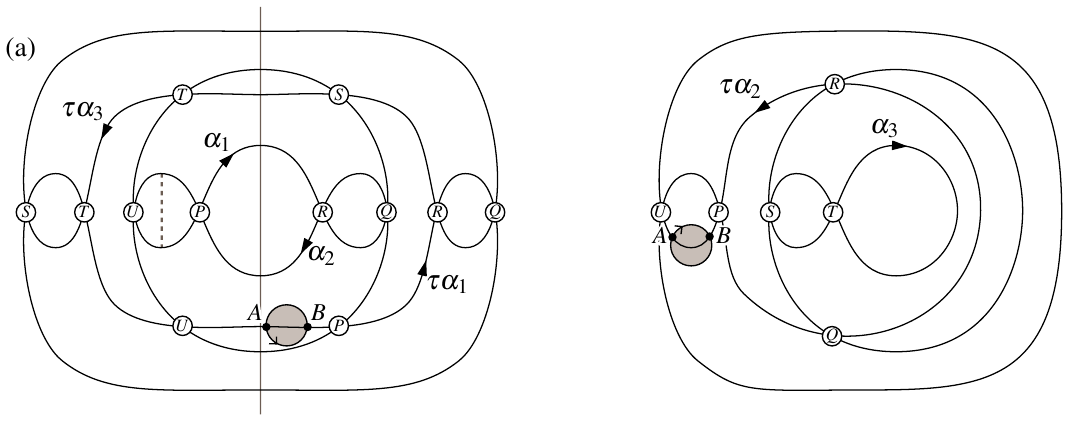}
  
  \includegraphics{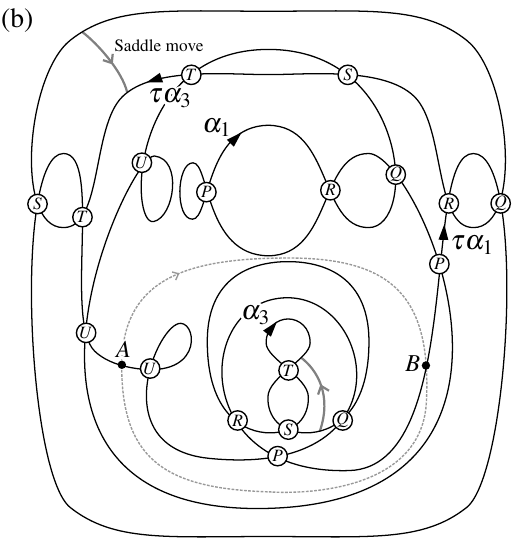}
  \hfill
  \includegraphics{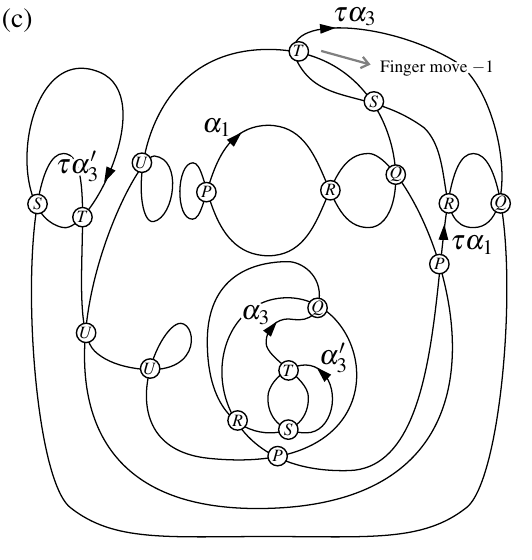}
  
  \includegraphics{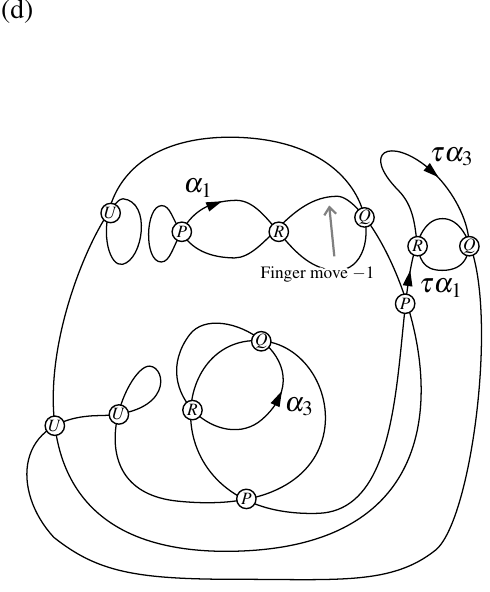}
  \hfill
  \includegraphics{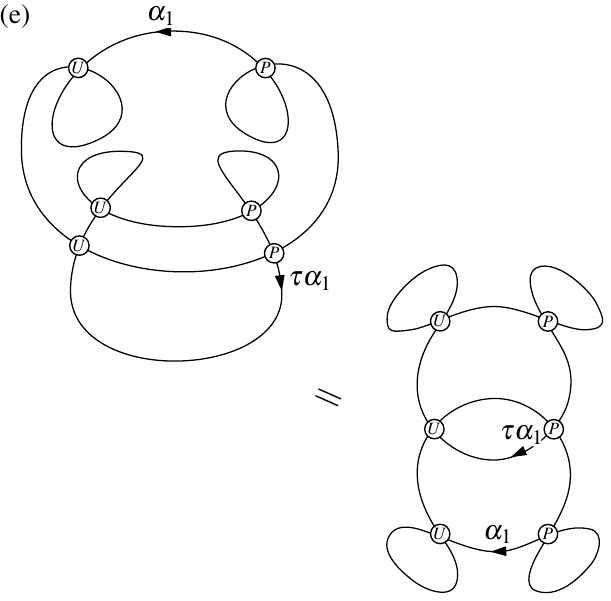}
  
  \caption{The first non-locally cyclic cover of $S^3$ branched over $K$ is 
    $S^3$.}
  \label{fig:irregular}
\end{figure}
Assume that $n=3$, and that $\rho(m)=(12)$.
The lifts $m_1$, $m_2$ and $m_3$ of $m$ to the domain surface $\widehat{S}$ of 
$\widehat{\Sigma}=p^{-1}(\Sigma_B)$ verify
\begin{itemize}
 \item $m_1$ connects $x_1$ with $x_2$;
 \item $m_2$ connects $x_2$ with $x_1$; and
 \item $m_3$ connects $x_3$ with itself. 
\end{itemize}
Therefore, $\widehat{S}$ is a disjoint union of two $2$-spheres. One of them, 
$S_{12}$, contains $x_1$ and $x_2$ and can be obtained by gluing two copies
$\Delta_1$ and $\Delta_2$ of the fan $\Delta$. The other 
one, containing $x_3$ must be a copy of the domain surface of $\Sigma_B$. The 
restriction of
$p_S$ to $S_{12}$ is a $2$-fold branched covering with branching set $\{A,B\}$,
and $p_S|_{S_3}$ is a $1$-fold branched covering with branching set $\{A,B\}$, 
hence a homeomorphism. The diagram $\widehat{\D}$ of $\widehat{\Sigma}$
is the lift to $\widehat{S}=S_{12}\sqcup S_3$ of the diagram $\D$
through $p_S$. By the same arguments of previous sections, $\widehat{\D}$ in
$S_{12}$ looks like the left-hand side of Figure~\ref{fig:irregular}(a).
The diagram $\widehat{\D}$ in $S_{3}$ looks exactly like the diagram
$\D$ in $S$, except for the identification of the curves, that will be
determined by the element $\rho(c)$ given by the monodromy homomorphism.
Set $\Sigma_{12}=\widehat{f}(S_{12})$ and $\Sigma_{3}=\widehat{f}(S_{3})$.

Since $\rho(m)$ and $\rho(c)$ verify the identity
\[
  \rho(m)\,\rho(c)\,\rho(m)=\rho(c)\,\rho(m)^{-1}\,\rho(c)
\]
and the subgroup of $\Omega_3$ generated by $\rho(m)$ and $\rho(c)$
acts transitively on the set $\{1,2,3\}$, 
then $\rho(c)=(1,3)$ or $\rho(c)=(2,3)$.
Assume that $\rho(c)=(2,3)$ (the case $\rho(c)=(1,3)$ is equivalent).
The endpoint of $a_1$ is related by $\widehat{f}$ with
the endpoint of $b_1$, the endpoint of $a_2$ is related by $\widehat{f}$ with
with the endpoint of $b_3$, and the endpoint of $a_2$ 
is related by $\widehat{f}$ with the endpoint of $b_3$. The resulting
sistering of $\widehat{\D}$ is indicated in 
Figure~\ref{fig:irregular}(a) (in this 
figure we depict $S_3$ as the fan $\Delta_3$ glued with itself, compare it 
with Figure~\ref{fig:banchoff}).
\begin{figure}
\centering
\subfigure[]{\includegraphics[width=0.45\textwidth]{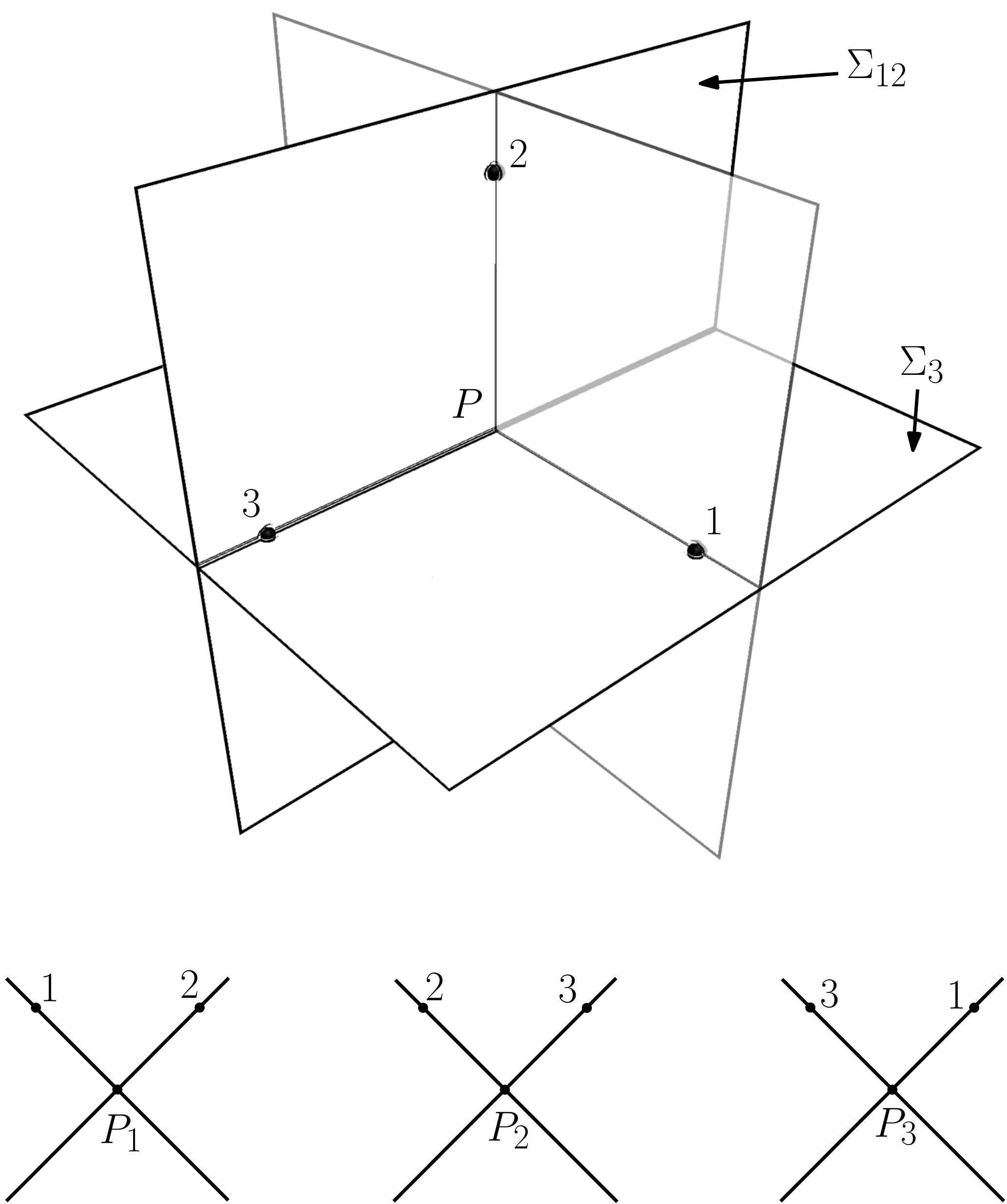}
\label{fig:irregular-piping-01} }
\hfill
\subfigure[]{\includegraphics[width=0.45\textwidth]{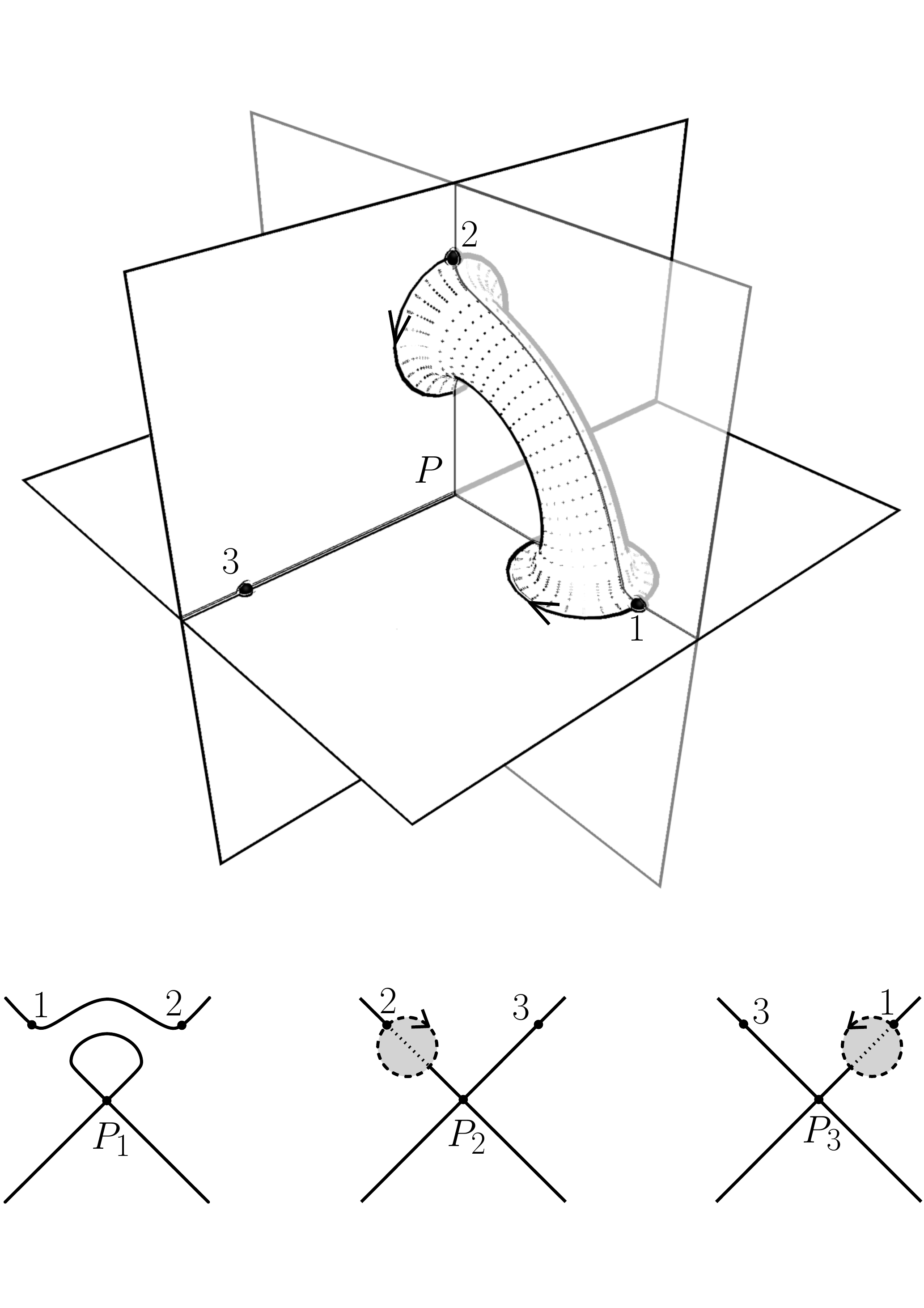}
\label{fig:irregular-piping-02} }

\caption{Surgery around a triple point.}
\label{fig:irregular-piping}
\end{figure}

Now, we modify $\widehat{\Sigma}$ by a surgery operation near a triple point 
where a sheet of $\Sigma_{12}$ and a sheet of $\Sigma_3$ 
intersect. We perform a ``piping'' between $\Sigma_{12}$ and $\Sigma_3$ around 
the triple point $P$ as indicated in the top part of Figure~\ref{fig:irregular-piping}.
The local effect of this operation on the diagram is depicted in the bottom 
part of the same picture. The resulting Dehn surface 
$\widetilde{\Sigma}$ has as domain the connected sum of $S_{12}$ and $S_3$, hence 
it is also a Dehn sphere. Its Johansson diagram appears in
Figure~\ref{fig:irregular}(b).

\begin{proposition}
  The Dehn sphere $\widetilde{\Sigma}$ fills $\widehat{M}$.  
\end{proposition}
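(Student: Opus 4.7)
The plan is to exploit that $\widehat{\Sigma} = \Sigma_{12} \cup \Sigma_{3}$ already fills $\widehat{M}$ by Theorem~\ref{thm:filling-lift-to-filling}, and to argue that the piping is a purely local surgery which only modifies the cell decomposition inside a small $3$-ball around $P$, while preserving the defining properties of a filling Dehn surface.

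First, I would observe that outside a small ball $B$ containing $P$, the surfaces $\widetilde{\Sigma}$ and $\widehat{\Sigma}$ coincide, so the cell structure of $\widehat{M} \setminus B$ is unchanged. The question therefore reduces to checking that $\widetilde{\Sigma} \cap B$ induces a valid cell decomposition of $B$. Inside $B$, the intersection $\widehat{\Sigma} \cap B$ consists of three transverse disks meeting at $P$, at least one from $\Sigma_{12}$ and at least one from $\Sigma_{3}$; together they cut $B$ into eight open octants and cut each disk into four open quarter-disks. The piping removes a small disk from each of the two chosen sheets and replaces them by an annular tube joining their boundary circles around $P$, eliminating the triple point. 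A direct inspection of the bottom of Figure~\ref{fig:irregular-piping} then shows that (i) the tube merges certain pairs of adjacent octants into open $3$-balls, the remaining octants being unaffected; (ii) the new $2$-cells inside $B$ consist of open disks bounded by arcs of the modified $1$-skeleton; and (iii) the singularity set of $\widetilde{\Sigma}$ inside $B$ is a $1$-complex with no triple points, the three edges through $P$ having been reconnected in pairs.

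Second, I would verify that $\widetilde{\Sigma}$ is indeed a Dehn sphere by computing its domain: the piping on the domain side performs a connected sum of the two components $S_{12}$ and $S_{3}$ of $\widehat{S}$, yielding $S_{12} \# S_{3} \cong S^2 \# S^2 \cong S^2$. Combined with the local analysis above, this shows that $\widetilde{\Sigma}$ is an immersed $2$-sphere defining a cell decomposition of $\widehat{M}$ whose $0$-, $1$- and $2$-dimensional skeletons are $T(\widetilde{\Sigma})$, $S(\widetilde{\Sigma})$ and $\widetilde{\Sigma}$ respectively, i.e.\ $\widetilde{\Sigma}$ fills $\widehat{M}$.

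The main obstacle lies in the careful bookkeeping of (i)--(iii): one must show that the merged regions are genuinely open $3$-balls (rather than solid tori or more complicated open $3$-manifolds) and that no face becomes an annulus or a pinched disk. This hinges on the fact that the tube is ``short'' and makes a single turn around $P$, so it identifies octants in the minimal combinatorial way compatible with the transversality of the three disks at $P$. Once this is verified, the global filling property follows by gluing the unchanged cells of $\widehat{M} \setminus B$ to the new cells inside $B$.
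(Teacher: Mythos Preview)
Your local analysis misses the essential global obstruction, and the paragraph where you acknowledge ``the main obstacle'' misdiagnoses what that obstruction is. The problem is \emph{not} that the tube might be complicated inside the ball $B$; it is that the octants inside $B$ are merely the germs of regions of $\widehat{\Sigma}$, and two distinct octants may very well belong to the \emph{same} global region of $\widehat{\Sigma}$. If the piping joins two octants that are already connected in $\widehat{M}\setminus B$, the resulting region of $\widetilde{\Sigma}$ is a $3$-ball with a $1$-handle attached, i.e.\ an open solid torus, and $\widetilde{\Sigma}$ fails to fill. No amount of local inspection of Figure~\ref{fig:irregular-piping} can rule this out; the shortness of the tube is irrelevant.

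The paper's proof addresses precisely this point with a global argument. It uses that $\Sigma_B$ is nulhomotopic in $S^3$, hence its lift $\widehat{\Sigma}_3$ is nulhomotopic and therefore separating in $\widehat{M}$. This forces the four regions of $\widehat{\Sigma}$ touched by the surgery to lie two on each side of $\widehat{\Sigma}_3$. It then shows that the two regions on the same side are genuinely distinct: if they coincided, one could build a loop in $\widehat{M}$ meeting $\widehat{\Sigma}$ transversely in a single non-singular point, and projecting to $S^3$ would give a loop meeting the nulhomotopic sphere $\Sigma_B$ transversely in a single point, which is impossible. Only after establishing that all four regions are different can one conclude that the two merged regions of $\widetilde{\Sigma}$ are open $3$-balls. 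Your argument needs an ingredient of this kind; the purely local reasoning you propose does not close the gap.
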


\begin{proof}
  In order to check that $\widetilde{\Sigma}$ fills $\widehat{M}$
  it must be proved that all its edges, faces and regions 
  are open $1$-, $2$- and $3$-dimensional disks, respectively.
  The diagram of $\widetilde{\Sigma}$ implies that the edges and faces of
  $\widetilde{\Sigma}$ verify this requirement.
  
  The embedded $2$-sphere $\widehat{\Sigma}_3$ is 
  nulhomotopic because $\Sigma_B$ is nulhomotopic, and therefore 
  $\widehat{\Sigma}_3$ is separating in $\widehat{M}$. This implies that 
  the surgery that transforms $\widehat{\Sigma}$ into $\widetilde{\Sigma}$ connects
  two regions $R_1$ and $R_2$ of $\widehat{\Sigma}$ on one connected component of 
  $\widehat{M}-\widehat{\Sigma}_3$ with another two regions $R_3$ and $R_4$ of 
  $\widehat{\Sigma}$ on the other connected component of 
  $\widehat{M}-\widehat{\Sigma}_3$, creating two regions of $\widetilde{\Sigma}$.
  If $R_1=R_2$, there would be a loop $\lambda$ in $\widehat{M}$ that intersects
  $\widehat{\Sigma}$ transversely only at one non-singular point of 
  $\widehat{\Sigma}$, and in this case $p\circ \lambda$ would intersect
  $\Sigma_B$ transversely only at one non-singular point of 
  $\Sigma_B$; but this cannot happen because $\Sigma_B$
  is nulhomotopic (as any Dehn sphere in $S^3$). Hence,
  $R_1\neq R_2$, and the same argument gives $R_3 \neq R_4$.
  Therefore, the four regions of $\widehat{\Sigma}$ that become connected
  in pairs by the surgery are all different, and this implies that all the 
  regions of $\widetilde{\Sigma}$ are open $3$-balls.
\end{proof}

The Johansson diagrams of two filling Dehn spheres of the
same $3$-manifold are related by a sequence of 
\emph{\textbf{f}-moves}~\cite{peazolibro,RHomotopies,tesis},
provided that both filling Dehn spheres are nulhomotopic
(an equivalent set of moves is proposed in~\cite{Amendola09}).
It is natural to ask if \emph{\textbf{f}}-moves suffice to prove
the following well-known result about the $3$-manifold $\widehat{M}$ 
(see~\cite{peazolibro} for more details on \emph{\textbf{f}}-moves).

\begin{theorem}[\cite{Burde,fox,GH,MontesinosTesis}%
  \footnote{Of course, these and other similar questions became automatically 
  solved after Perelman's proof of the Poincar\'e Conjecture~\cite{perelman}.}]
  The $3$-fold irregular branched covering of the trefoil is the $3$-sphere.
\end{theorem}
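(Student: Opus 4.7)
The plan is to establish $\widehat{M}\cong S^3$ by exhibiting a sequence of \textbf{f}-moves starting from the Johansson diagram of the filling Dehn sphere $\widetilde{\Sigma}$ constructed above (Figure~\ref{fig:irregular}(b)) and ending at the diagram of a known filling Dehn sphere of $S^3$, for instance Banchoff's diagram of Figure~\ref{fig:banchoff}(a) or one of the minimal $S^3$-diagrams catalogued in~\cite{peazolibro,tesis}. Since \textbf{f}-moves relate the Johansson diagrams of any two nulhomotopic filling Dehn spheres lying in the same ambient $3$-manifold, producing such a chain of moves immediately forces $\widehat{M}$ to coincide with $S^3$.

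Before running the moves, I would certify that $\widetilde{\Sigma}$ is nulhomotopic in $\widehat{M}$, so that the hypotheses required for the applicability of \textbf{f}-moves are actually satisfied. The component $\widehat{\Sigma}_3$ is the homeomorphic lift of the nulhomotopic Banchoff sphere and is therefore nulhomotopic; the component $\widehat{\Sigma}_{12}$ is a $2$-fold branched cover of $\Sigma_B$ branched at $\{A,B\}$, so that a nulhomotopy of $\Sigma_B$ relative to $\{A,B\}$ lifts to a nulhomotopy of $\widehat{\Sigma}_{12}$. The piping surgery used to connect the two pieces into $\widetilde{\Sigma}$ preserves nulhomotopy, since it only inserts a tube between two nulhomotopic surfaces along an arc.

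With the hypothesis secured, I would carry out the explicit sequence of local diagram modifications displayed in Figures~\ref{fig:irregular}(c)--(e), checking at each stage that what is being performed is indeed a valid \textbf{f}-move: both the configuration of the curves and the sistering involution $\tau$ must be updated consistently, and the resulting Dehn surface must remain filling. The expected outcome is that, after a short chain of moves, the diagram collapses onto a recognisable $S^3$-diagram, at which point Proposition~\ref{prop:diagram-represent-knots} (or rather its analogue for pure Dehn spheres, which is the classical reconstruction theorem for Johansson diagrams) yields the identification $\widehat{M}\cong S^3$.

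The main obstacle will be the combinatorial bookkeeping of the moves. The starting diagram in Figure~\ref{fig:irregular}(b) already carries all the identifications coming from the monodromy pair $(\rho(m),\rho(c))=((1,2),(2,3))$, so even innocent-looking local modifications can disturb the global triplet structure and compromise either the fillingness or the nulhomotopic character at an intermediate step. Producing a short, fully verified sequence of \textbf{f}-moves terminating at a catalogued $S^3$-diagram, and ruling out the possibility that one gets stuck in an intermediate local minimum of diagram complexity, is therefore where the real work of this argument lies.
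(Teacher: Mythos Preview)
Your overall strategy coincides with the paper's: transform the Johansson diagram $\widetilde{\D}$ of Figure~\ref{fig:irregular}(b) by \textbf{f}-moves into a catalogued filling diagram of $S^3$ (the paper lands on Johansson's sphere, Figure~\ref{fig:irregular}(e), via one saddle move and two finger moves~$-1$). Two points deserve comment.

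First, your nulhomotopy detour is both unnecessary and not quite sound as written. A ``nulhomotopy of $\Sigma_B$ relative to $\{A,B\}$'' does not exist: you cannot contract a $2$-sphere to a point while keeping two distinct points fixed. More generally, lifting a nulhomotopy of $\Sigma_B$ through the branched covering $p$ is delicate precisely because the homotopy will sweep across the branch locus $K$; the covering is only honest away from $K$. The paper sidesteps this entirely.

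Second, and this is the substantive idea you are missing, the paper does not verify fillingness forward along the chain of moves. It works \emph{backward}: $\D_3$ (Johansson's sphere) is filling; a finger move~$+1$ applied to a filling diagram yields a filling diagram, so $\D_2$ and then $\D_1$ are filling. Once $\widetilde{\D}$ and $\D_1$ are both known to be filling, the saddle move between them is by definition an \textbf{f}-move, and \cite[Thm.~5.20]{peazolibro} gives $\widehat{M}\cong S^3$. Your proposal flags ``checking at each stage that the resulting Dehn surface must remain filling'' as the main obstacle, but does not supply a mechanism; the backward argument from a known $S^3$-diagram is exactly that mechanism, and without it the saddle step cannot be certified as an \textbf{f}-move.
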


\begin{proof}
  Starting from the diagram $\widetilde{\D}$ of Figure~\ref{fig:irregular}(b), 
  after a \emph{saddle move} the diagram $\D_1$ of 
  Figure~\ref{fig:irregular}(c) is obtained. After a \emph{finger move} $-1$ 
  (and ambient isotopies) we arrive to the diagram $\D_2$ of 
  Figure~\ref{fig:irregular}(d), and another finger move $-1$ finally gives the 
  diagram $\D_3$  of Figure~\ref{fig:irregular}(e), which coincides with the 
  diagram of \emph{Johansson's sphere}, another well known filling Dehn sphere 
  of $S^3$ (see~\cite{peazolibro,knots}). Since $\D_3$ is filling, $\D_2$ is 
  also filling because it is obtained applying a finger move~$+1$ to $\D_3$ 
  (see~\cite[Lem.~5.6 and Thm.~5.20]{peazolibro}). The same argument applies to 
  conclude that $\D_1$ is also filling. Since $\widetilde{\D}$ and $\D_1$ are 
  filling diagrams, the saddle move that relate them is an 
  \emph{\textbf{f}}-move. By~\cite[Thm.~5.20]{peazolibro}, $\widehat{M}$ is 
  $S^3$.
\end{proof}

\end{document}